
\documentclass[a4paper,11pt]{amsart}

\usepackage[utf8]{inputenc}
\usepackage[english]{babel}
\usepackage[pdftex]{graphicx}
\usepackage{amsmath,amsfonts}
\usepackage{color}
\usepackage[left=2.5cm,top=4cm,right=2.5cm,bottom=4cm]{geometry}
\usepackage[hidelinks]{hyperref}
\usepackage{fancyhdr}
\usepackage[font={small,it}]{caption}
\usepackage{subcaption}
\usepackage{mathtools}
\usepackage{caption}
\usepackage{relsize}
\usepackage{amssymb}
\usepackage{amsthm}
\usepackage{mathrsfs}
\usepackage[parfill]{parskip}
\usepackage{enumitem}   
\usepackage{algorithm}
\usepackage{algpseudocode}
\usepackage{fancyhdr}
\usepackage[dvipsnames]{xcolor}
\usepackage{tcolorbox}
\usepackage{xspace}
\usepackage{csquotes}

\usepackage{mathtools}
\mathtoolsset{showonlyrefs=true}

\usepackage[backend=biber,style=numeric,giveninits,maxnames=1000]{biblatex}
\usepackage{bbm}
\addbibresource{reference.bib}

\date{\today}

\numberwithin{equation}{section}

\newtheorem{theorem}{Theorem}[section]
\newtheorem{corollary}[theorem]{Corollary}
\newtheorem{lemma}[theorem]{Lemma}
\newtheorem{proposition}[theorem]{Proposition}
\theoremstyle{definition}
\newtheorem{definition}[theorem]{Definition}

\newtheorem{remark}[theorem]{Remark}
\newtheorem{assumption}[theorem]{Assumption}

\graphicspath{{Fig/}}


\numberwithin{equation}{section}

\usepackage{xspace}
\usepackage{mathtools}
\mathtoolsset{showonlyrefs=true}

\newcommand \R {\mathbb{R}}
\renewcommand \P {\mathbb{P}}
\newcommand \F {\mathcal{F}}
\newcommand \E {\mathbb{E}}
\newcommand \N {\mathbb{N}}
\newcommand \calS {\mathcal{S}}
\newcommand \J {\mathcal{J}}
\newcommand \calH {\mathcal{H}}
\newcommand \T {\mathcal{T}}

\newcommand \dm {{\diamond}}
\newcommand \Dt {\Delta t}
\newcommand \M {{M_H}} 

\newcommand{\tZ}{{\tilde Z}}
\newcommand{\tJ}{{\tilde J}}
\newcommand{\tZt}{{\tZ^t}}
\newcommand{\tJt}{{\tJ^t}}
\newcommand{\tZT}{{\tZ^T}}
\newcommand{\tJT}{{\tJ^T}}

\newcommand{\br}[1]{\left(#1\right)} 
\newcommand{\fbm}[1]{B^H({#1})} 
\newcommand{\der}[2]{\frac{\partial #1} {\partial #2} } 
\newcommand{\ind}[1]{\mathbf{1}_{[0,#1]}} 
\newcommand{\wexp}[1]{\exp^\dm\left(#1\right)} 
\newcommand \intsum {\sum_{i=0}^{n-1}\int_{t_i}^{t_{i+1}}} 
\newcommand{\expect}[1]{{\E\left[#1\right]}} 

\newcommand{\GBMEM}{\texttt{GfBm\_EM}\xspace}
\newcommand{\MISHURA}{\texttt{Mishura\_EM}\xspace}
\newcommand{\EXPFREEZE}{\texttt{ExpFreeze}\xspace}
\newcommand{\Rosenbrock}{\texttt{Rosenbrock}\xspace}

\setlength{\headheight}{13.59999pt}
\pagestyle{fancy}
\fancyhf{}
\fancyhead[R]{\thepage}
\fancyhead[L]{\textit{\nouppercase{\leftmark}}}

\begin{document}

\title[Numerical Approximation of WIS-SDEs]{Numerical approximation of SDEs driven by fractional Brownian motion for all $H\in(0,1)$ using WIS integration.}

\author{U. Erdo\v gan} 
\address{Department Of Mathematics, Eskisehir Technical University, Eskisehir, Turkiye}
\curraddr{}
\email{utkuerdogan@eskisehir.edu.tr}

\author{G.~J. Lord}
\address{Mathematics, IMAPP, Radboud University, Nijmegen, The Netherlands.}
\email{gabriel.lord@ru.nl}
\thanks{}

\author{R.~B. Schieven}
\address{Korteweg--De Vries Institute, University of Amsterdam, Amsterdam, The Netherlands.}
\curraddr{}
\email{r.b.schieven@uva.nl}
\thanks{}

\begin{abstract} 
We examine the numerical approximation of a quasilinear stochastic differential equation (SDE) with multiplicative fractional Brownian motion. The stochastic integral is interpreted in the Wick-It\^o-Skorohod (WIS) sense that is well defined and centered for all $H\in(0,1)$. We give an introduction to the theory of WIS integration before we examine existence and uniqueness of a solution to the SDE. We then introduce our numerical method which is based on previous theoretical results for $H\geq \frac{1}{2}$. We construct explicitly a translation operator required for the practical implementation of the method and are not aware of any other implementation of a numerical method for the WIS SDE. We then prove a strong convergence result that gives, in the autonomous case, an error of $O(\Dt^H)$ and in the non-autonomous case $O(\Dt^{\min(H,\zeta)})$, where $\zeta$ is a time-H\"older continuity parameter. We present some numerical experiments and conjecture that the theoretical results may not be optimal since we observe numerically a rate of $\min(H+\frac{1}{2},1)$ in the autonomous case. This work opens up the possibility to efficiently simulate SDEs for all $H$ values, including small values of $H$ when the stochastic integral is interpreted in the WIS sense.\\

\noindent
\textbf{Keywords:}
Wick-It\^o-Skorohod Integral, WIS Stochastic Differential Equation, fractional Brownian motion, multiplicative noise, Strong convergence.\\

\noindent
\textbf{MSC:} 65C30, 60H35, 60G22.
\end{abstract}

%

\maketitle

\section{Introduction}\label{sec: intro}

Over the last three decades, fractional Brownian motion (fBm) and stochastic differential equations (SDEs) driven by fBm have received a lot of attention. 
FBm is a generalization of standard Brownian motion and is governed by a one-dimensional parameter $H\in(0,1]$ called the \emph{Hurst parameter}. If $H>\frac{1}{2}$, increments of fBm have positive correlation and if $H<\frac{1}{2}$, negative correlation, whereas $H=\frac{1}{2}$ corresponds to independent increments and fBm reduces to the standard Brownian motion. 
We consider in this paper the numerical approximation of a quasilinear SDE with multiplicative noise for all $H\in(0,1]$ of the form
\begin{equation}
\label{eq: quasi-linear SDE}
dX(t)=\alpha X(t)dt+a(t,X(t))dt+\beta X(t)dB^H(t), \quad X(0)=x_0, \quad t\in[0,T],
\end{equation}
where $\alpha,\beta,x_0\in\R$, $T>0$ and $a\colon[0,T]\times\R\to\R$ satisfies a global Lipschitz assumption in its second component (see Assumption~\ref{ass:a_existence}) and a H\"older continuity assumption in its first (see Assumption~\ref{ass:a_numerics}). We interpret the stochastic integral in the Wick-It\^o-Skorohod (WIS) sense which is well defined and centered for all $H\in(0,1]$.

SDEs forced with fBm have been applied to a large variety of topics such as  modelling of epidemic diseases \cite{Akinlar2020}, the pricing of weather derivatives \cite{Benth2003,Benth2013,Brody2002}, hydrology \cite{Lu2003}, weather forecasting \cite{Rivero2016}, modelling of electricity prices \cite{PrakasaRao2016,Simonsen2003}, neuroscience \cite{Coutin2002} and in the field of financial mathematics \cite{Bender2008,Czichowsky2017,Garcin2022,Guasoni2021,Han2023,Hu2003, Rogers1997}.  It is remarkable, however, that there are very few results on the numerical approximation of SDEs of the form \eqref{eq: quasi-linear SDE} for all $H \in (0,1]$. 
There is a relatively large body of work on the numerical methods for SDEs forced by additive fBm, when it is straightforward to interpret the stochastic integral. For multiplicative noise, to make sense of \eqref{eq: quasi-linear SDE}, we need to understand the stochastic integral
\begin{equation*}
    \int_0^T Y(t)d\fbm{t},
\end{equation*}
where $Y(t)$ is some stochastic process. Since $\fbm{t}$ is not a semimartingale for $H\neq\frac{1}{2}$, this is not straightforward. The first step was taken by Lin \cite{Lin1995}, who aimed to define the integral as the limit of Riemann sums. This is the basic principle behind the definition of the It\^o integral for standard Brownian motion. For $H>\frac{1}{2}$, the limit exists pathwise, but typically gives a random variable which is not centered. In the case of $H<\frac{1}{2}$, the Riemann sums often fail to converge, and thus a valid integral cannot be defined this way. The idea of taking the limit of Riemann sums has been captured in the notion of the \emph{pathwise integral}, introduced in a more general setting by Russo and Vallois \cite{Russo1993} and discussed in the fBm setting in \cite{Nualart2003}. As shown in \cite{Cheridito2005}, this approach extends the integral up to $H>\frac{1}{6}$, but for $H\in(\frac{1}{6},\frac{1}{2})$ the integral only exists in a weak sense. Elliot and van der Hoek \cite{Elliott2003} developed the  \emph{Wick--It\^o--Skorohod integral} or \emph{WIS integral}, which is a centered integral defined for all $H\in (0,1]$. It defines the integral on a particular space called the \emph{white noise probability space}. 
An extensive overview of the all different integrals and their relations is given in \cite{Biagini2008} and \cite{Mishura2008}.

There is some work available on numerical methods for SDEs driven by multiplicative fBm, many of which are for the pathwise integral in the case of $H>\frac{1}{2}$ \cite[see, for example,][]{Jamshidi2021,Mishura2008,Zhang2021}. We note the very recent work of \cite{10.1093/imanum/drad019}
who examine a stochastic PDE with multiplicative noise with a pathwise integral.
For $H\in(\frac{1}{2},\frac{1}{6})$, \cite{NeuenkirchNourdin} examines convergence of the Crank-Nicholson method for the pathwise Russo--Vallois integrals.
Recently, the idea to use rough path analysis to tackle the regime $H<\frac{1}{2}$ has received attention. This approach was started by \cite{Coutin2002} and is based on \cite{Lyons1994}. Examples for the regime $\frac{1}{3}<H<\frac{1}{2}$ can be found in \cite{Leon2023} and \cite{Liu2019}. We also note the related stochastic Volterra approach, which makes use of fractional kernels. Recent results include \cite{AlfonsiKebaier}, which examines kernel approximations for $H\in(0,\frac{1}{2})$.
However, the WIS integral remains largely unexplored when it comes to discussion and implementation of numerical methods. This work aims to contribute to opening up the use of the WIS integral in a numerical setting and hence in applications.

Notable exceptions are Chapter 3 of \cite{Mishura2008} and \cite{Mishura2008article} that consider the numerical approximation of SDEs of the form \eqref{eq: quasi-linear SDE}. In fact the analysis covers the more general SDE where $\beta$ may be non-autonomous (and $\alpha=0$). For $H\geq \frac{1}{2}$ a numerical method is proposed, that we denote \MISHURA, and a strong convergence result for autonomous $a$  is proved for $H\in [\frac{1}{2},1)$ of the form
\begin{equation}\label{eq:introstrong}
 \br{\E\left[\br{X(t_n)-X_n}^2\right]}^\frac{1}{2}\leq C_H\Dt^{H},  
\end{equation}
where $X_n\approx X(t_n)$ and $C_H>0$ is a constant that may depend on $H$. 
We introduce in this paper a numerical method, \GBMEM, based on that in \cite{Mishura2008}, that approximates \eqref{eq: quasi-linear SDE} and we prove a strong convergence result of the form \eqref{eq:introstrong} for all $H\in(0,1)$.  Further, taking $\beta$ constant and $\alpha=0$ we extend the convergence proof for \MISHURA to $H\in(0,\frac{1}{2})$.
 
Although a strong convergence result is proved in \cite{Mishura2008article,Mishura2008} the method does not seem to have ever been implemented, in part as it requires a non-trivial translation of the noise in the general case. We detail in Section~\ref{sec: quasilinear} how the methods \MISHURA and \GBMEM (as well as other methods)  may be implemented for the special case of \eqref{eq: quasi-linear SDE} and in Section~\ref{sec: Numerics}  illustrate their use. 
We observe from the numerical experiments that, for $H\geq \frac{1}{2}$, one may expect a theoretical convergence rate of order one, while for $H<\frac{1}{2}$ an even faster rate appears to occur numerically. For $H=\frac12$ the numerical method, \GBMEM, is equivalent to the Geometric Brownian Motion integrator proposed in \cite{ErdoganLord} where strong convergence of rate one was proved, and so the theoretical result is known to be sub-optimal for $H=\frac{1}{2}$.
 Based on numerous numerical experiments we conjecture a strong convergence result for autonomous $a$ of the form
\begin{equation}\label{eq:conjecture}
 \br{\E\left[\br{X(t_n)-X_n}^2\right]}^\frac{1}{2}\leq C_H\Dt^{\min(H+\frac{1}{2},1)}.  
\end{equation}

The outline of the paper is as follows. We start in Section~\ref{sec: fBm}  by providing a short recap on fBm before introducing white noise in Section~\ref{sec:whitenoise}, Wick calculus in Section~\ref{sec:WickCalculus} and the WIS integral in Section~\ref{sec:WISIntegration}. We then state the existence and uniqueness result for \eqref{eq: quasi-linear SDE} in Section~\ref{sec: quasilinear} before introducing the numerical method \GBMEM and proving convergence in Section~\ref{sec: convergence_proof}.  We then present our numerical experiments in Section~\ref{sec: Numerics}.  We conclude in Section~\ref{sec:discussion} with a discussion on the barrier we encountered in improving on this rate of convergence for $H\neq \frac{1}{2}$ and suggest some future research directions.

\section{Review of WIS integration}\label{sec:ReviewWIS}

We briefly recall some key properties of fBm, $B^H(t)$, before discussing how to interpret its derivative $W^H(t)=\frac{d}{dt} B^H(t)$ and some basic properties of Wick calculus. We are then in a position to introduce WIS integration. 
This material can be found, for example, in \cite{Mishura2008,Oksendal2007}.
\subsection{Fractional BM}
\label{sec: fBm}
Let $H\in(0,1]$. A one-dimensional stochastic process $(\fbm{t})_{t\geq0}$, is called a \emph{fractional Brownian motion} (fBm) with \emph{Hurst parameter} $H$ if it is a centered and continuous Gaussian process with the covariance function
\begin{equation*}
    \E[\fbm{t}\fbm{s}]=\frac{1}{2}\br{t^{2H}+s^{2H}-|t-s|^{2H}}.
\end{equation*}
This covariance function implies that $\fbm{0}=0$ almost surely and that $\E[\fbm{t}^2]=t^{2H}$. Moreover, its increments are stationary and $\fbm{s}-\fbm{t}\sim\mathcal{N}(0,|s-t|^{2H})$. For $H=\frac{1}{2}$ it is straightforward to check that fBm is in fact a standard Brownian motion. The notion of an fBm is therefore a generalization of that of standard Brownian motion, however for $H\neq\frac{1}{2}$, the increments of an fBm are not independent. 

Aside from the correlation of the increments, $H$ also governs the `roughness' of the sample paths. It can be shown that on any bounded interval $[0,T]$  fBm admits a modification that is almost surely H\"older continuous of order $\alpha$ for every $0<\alpha<H$. If we consider the process $X(t)=\xi t$, where $\xi\sim\mathcal{N}(0,1)$, then a straightforward computation shows that $X(t)$ is an fBm with $H=1$. Therefore, increased smoothness of the paths of an fBm culminates in $H=1$, where the paths are smooth. Since $H=1$ is trivial in this regard, we will from now on only consider $H\in(0,1)$.
We note that we can efficiently sample fBm, $B^H(t,\omega)$, numerically by using a circulant embedding method \cite[see, for example,][]{Lord2014}.

\subsection{White noise}
\label{sec:whitenoise}
FBm can be constructed on a specific probability space. Random variables on this probability with finite second moment can be decomposed in terms of a particular basis. This will provide us with a notion of a derivative of fBm, which is the main ingredient for the WIS integral.

Consider the Schwartz space of rapidly decreasing smooth functions, which is given by 
\begin{equation}
    \calS(\R)\coloneqq\{f\in C^\infty(\R)\mid \|f\|_{\alpha,\beta}<\infty, \,\forall\alpha,\beta\in\N\},
\end{equation}
where
\begin{equation*}
    \|f\|_{\alpha,\beta}\coloneqq\sup_{x\in\R}|x^\alpha f^{(\beta)}(x)|.
\end{equation*}
Let $\calS'(\R)$ be the dual space of $\calS(\R)$. Let $w\colon\calS'(\R) \times \calS(\R)\to\R$ be the mapping that outputs the action of $\omega\in\calS'(\R)$ on $f\in\calS(\R)$, so
$w(\omega,f)\coloneqq\omega(f)$.
In what follows $f\in\calS(\R)$ is often fixed, in which case the mapping is denoted as 
\begin{equation*}
    w_f(\omega)\coloneqq w(\omega,f).
\end{equation*}
Please be aware that in other literature the action of $\omega$ on $f$ is also often denoted as $\langle \omega,f\rangle$. 
We equip $\calS'(\R)$ with the weak-* topology and consider the Borel $\sigma-$algebra $\F=\mathcal{B}(\calS'(\R))$. 

From now on, we let $\Omega\coloneqq\calS'(\R)$. By the Bochner--Minlos theorem, see for example  \cite[Appendix A]{Holden1996}, there exists a probability measure $\P$ on $(\Omega,\F)$ such that 
\begin{equation}
    \label{eq: characteristic Schwartz}
    \E\left[\exp(iw_f)\right]=\int_\Omega \exp(iw_f(\omega))\P(d\omega)=\exp\left(-\frac{1}{2}||f||_{{L^2(\R)}}^2\right),\qquad \textup{for all } f\in\calS(\R).
\end{equation}
This measure $\P $ is called the \emph{white noise probability measure} and the triple $\left(\calS'(\R),\mathcal{B}(\calS'(\R)),\P \right)$ is called the \emph{white noise probablity space}.
Note that the mapping $\omega\mapsto w_f(\omega)$ is measurable, hence a random variable. Moreover, for any $f\in\calS(\R)$ we have that the characteristic function $\varphi_f\colon\R\to\mathbb{C}$ of ${w_{f}}$ is given by 
$$
\varphi_f(t)= \int_\Omega \exp(itw_f(\omega))\P(d\omega)=\int_\Omega \exp(iw_{tf}(\omega))\P(d\omega)\\
=\exp\left(-\frac{1}{2}t^2||f||_{{L^2(\R)}}^2\right).
$$
By the uniqueness of the characteristic function, this means that for any $f\in\calS(\R)$ the random variable ${w_{f}}$ is normally distributed with mean $0$ and variance $\|f\|^2_{L^2(\R)}$.

Since $\E[w_f^2]=\|f\|^2_{L^2(\R)}$, the mapping from $\calS(\R)$ to ${L^2(\P)}$ given by $f\mapsto {w_{f}}$ is an isometry. Combined with the fact that $\calS(\R)$ is dense in ${L^2(\R)}$ this mapping can be extended to a mapping from ${L^2(\R)}$ to ${L^2(\P)}$ with $${w_{f}}=\lim_{n\to\infty} w_{f_n},$$ where the limit is taken in ${L^2(\P)}$ and $(f_n)_{n\in\N}\subset\calS(\R)$ is such that $f_n\to f$ in ${L^2(\R)}$. Equation \eqref{eq: characteristic Schwartz} still holds for general $f\in{L^2(\R)}$. 
As a consequence, for any $f\in{L^2(\R)}$, ${w_{f}}$ is a normally distributed random variable with mean $0$ and variance $\|f\|_{L^2(\R)}^2$. The fact that ${w_{f}}$ is normally distributed for any $f\in{L^2(\R)}$ is exactly why $(\Omega,\F,\P)$ was constructed according to \eqref{eq: characteristic Schwartz}.

Random variables $X\in{L^2(\P)}$ on the white noise probability space can be decomposed in terms of a particular basis using Hermite polynomials and Hermite functions. 
For $n\in\N_0$, the \emph{Hermite polynomial} of order $n$ is defined as 
\begin{equation}
\label{eq: hermite poly}
    h_n(x)\coloneqq(-1)^ne^{x^2/2}\frac{d^n}{dx^n}\left(e^{-x^2/2}\right).
\end{equation}
The first few Hermite polynomials are easily computed to be
\begin{equation*}
    h_0(x)=1,\qquad h_1(x)=x,\qquad h_2(x)=x^2-1,\qquad h_3(x)=x^3-3x,\qquad\cdots .
\end{equation*}

The \emph{Hermite functions} of order $n$ are consequently defined for $n\geq 1$ as 
\begin{equation}
\label{eq: hermfunc}
    \xi_n(x)\coloneqq\pi^{-1/4}((n-1)!)^{-1/2}h_{n-1}(\sqrt{2}x)e^{-x^2/2}.
\end{equation}

It is well known that the set of Hermite functions is an orthonormal basis of ${L^2(\R)}$ \cite[see, for example,][Section 1.1]{Thangavelu1993}.  
Hermite functions eventually decay exponentially, so that $\xi_n\in\calS(\R)$ for every $n\geq 1$. 

Using the notation of \cite{Holden1996}, let $\J\coloneqq (\N_0^\N)_c$ be the set of multi-indices of arbitrary length, i.e. the set of all multi-indices $\alpha=(\alpha_1,\alpha_2,\dots)$ with only finitely many $\alpha_i\neq0$. The largest $n\in\N$ such that $\alpha_n\neq0$ is called the \emph{length} of $\alpha$. We will usually suppress the tail of zeros for convenience and write $\alpha=(\alpha_1,\dots,\alpha_n)$. Given $\alpha=(\alpha_1,\dots,\alpha_n)\in\J$ we define the absolute value $|\alpha|$ and the factorial $\alpha !$ as  
\begin{equation*}
    |\alpha|\coloneqq\sum_{i=1}^n\alpha_i, \qquad\qquad \alpha!\coloneqq\prod_{i=1}^n\alpha_i!.
\end{equation*}

For every $\alpha=(\alpha_1,\dots,\alpha_n)\in\J$ we consider the random variable $\calH_\alpha\colon\Omega\to\R$ defined by
\begin{equation}
\label{eq: Halpha}
    \calH_\alpha(\omega)\coloneqq\prod_{i=1}^nh_{\alpha_i}\br{w_{\xi_i}(\omega)}=h_{\alpha_1}(w_{\xi_1}(\omega))h_{\alpha_2}(w_{\xi_2}(\omega))...h_{\alpha_n}(w_{\xi_n}(\omega)).
\end{equation}
Note that since $h_0(x)=1$, we have $\calH_0=1$. These random variables form an orthogonal basis of ${L^2(\P)}$ termed the \emph{Wiener--It\^o chaos expansion}. For a proof we refer to Theorem~2.2.3 in \cite{Holden1996}. 
\begin{theorem}[Wiener--It\^o chaos expansion]
\label{thm: chaos expansion}
    The set $\{\calH_\alpha\}_{\alpha\in\J}$ is orthogonal in ${L^2(\P)}$ with $\E[\calH_\alpha^2]=\alpha!$. Moreover, if we let $F\in L^2(\P)$, then there is a unique sequence $\{c_\alpha\}_{\alpha\in\J}$ in $\R$ such that 
    \begin{equation*}
    F=\sum_{\alpha\in\J}c_\alpha\calH_\alpha.
    \end{equation*}
    Here, the convergence of the sum is in ${L^2(\P)}$.
\end{theorem}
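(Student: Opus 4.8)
The plan is to handle the two assertions separately: first the orthogonality together with the norm identity $\E[\H_\alpha^2]=\alpha!$, and then the completeness of the system, from which existence and uniqueness of the expansion follow.

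First I would reduce everything to a statement about independent standard Gaussians. Polarising the isometry $\E[w_f^2]=\|f\|_{\Ltr}^2$ gives $\E[w_f w_g]=\langle f,g\rangle_{\Ltr}$, and since $\omega\mapsto\omega(f)$ is linear any finite family $(w_{f_1},\dots,w_{f_k})$ is jointly Gaussian; for jointly Gaussian variables vanishing covariance is independence. As the Hermite functions satisfy $\langle\xi_i,\xi_j\rangle_{\Ltr}=\delta_{ij}$, the variables $\{w_{\xi_i}\}_{i\ge1}$ are therefore i.i.d.\ $\mathcal N(0,1)$. Invoking the classical Gaussian orthogonality of Hermite polynomials, $\E[h_m(Z)h_n(Z)]=n!\,\delta_{mn}$ for $Z\sim\mathcal N(0,1)$, and factorising the product \eqref{eq: Halpha} over the independent coordinates yields, for $\alpha,\beta\in\J$,
\[
\E[\H_\alpha\H_\beta]=\prod_i\E\!\left[h_{\alpha_i}(w_{\xi_i})h_{\beta_i}(w_{\xi_i})\right]=\prod_i\alpha_i!\,\delta_{\alpha_i\beta_i}=\alpha!\,\delta_{\alpha\beta},
\]
which is both the orthogonality and the norm claim. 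Uniqueness of the coefficients in any convergent expansion is then immediate, since orthogonality and continuity of the inner product force $c_\alpha=\E[F\H_\alpha]/\alpha!$.

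For existence it suffices to prove that the closed linear span of $\{\H_\alpha\}_{\alpha\in\J}$ is all of $\srv$. The engine here is the Hermite generating identity $\exp(tx-\tfrac12 t^2)=\sum_{n\ge0}\frac{t^n}{n!}h_n(x)$. For a finite combination $f=\sum_{i=1}^n a_i\xi_i$ one has $w_f=\sum_i a_i w_{\xi_i}$, and applying the identity in each independent coordinate gives
\[
\exp\!\br{w_f-\tfrac12\|f\|_{\Ltr}^2}=\prod_{i=1}^n\sum_{k\ge0}\frac{a_i^k}{k!}h_k(w_{\xi_i})=\sum_{\alpha\in\J}\frac{a^\alpha}{\alpha!}\H_\alpha,
\]
with $a^\alpha=\prod_i a_i^{\alpha_i}$, so every such exponential lies in $\overline{\mathrm{span}}\{\H_\alpha\}$.

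The main obstacle is to show that these exponentials are themselves total in $\srv$: if $F\in\srv$ satisfies $\E[F\exp(w_f)]=0$ for every $f\in\mathrm{span}\{\xi_i\}$, then $F=0$. Because $\{\xi_i\}$ is an orthonormal basis of $\Ltr$ and $\F=\mathcal B(\Swd)$ is generated by the evaluations $\omega\mapsto w_f(\omega)$, one has $\F=\sigma(w_{\xi_1},w_{\xi_2},\dots)$ up to $\P$-null sets. Fixing $n$, the map $(t_1,\dots,t_n)\mapsto\E[F\exp(\sum_i t_i w_{\xi_i})]$ extends to an entire function of the $t_i$ by the Gaussian moment bounds, so vanishing on $\R^n$ propagates to purely imaginary arguments; this says the Fourier transform of the finite signed measure $A\mapsto\E[F\,\mathbf 1_A]$, pushed forward under $(w_{\xi_1},\dots,w_{\xi_n})$, is identically zero. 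Uniqueness of Fourier transforms then gives $\E[F\mid w_{\xi_1},\dots,w_{\xi_n}]=0$ for each $n$, and letting $n\to\infty$ via martingale convergence on the increasing $\sigma$-algebras $\sigma(w_{\xi_1},\dots,w_{\xi_n})\uparrow\F$ forces $F=0$. This density, combined with the orthogonality above, yields the expansion and finishes the proof. The delicate points to watch are the justification of the analytic continuation and the $\srv$ convergence of the exponential series, both of which rest on the Gaussian integrability of the $w_{\xi_i}$.
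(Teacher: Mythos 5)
Your proof is correct. The paper itself does not prove this theorem --- it defers to Theorem 2.2.3 of Holden et al.\ --- so you have supplied a self-contained argument where the paper gives none. Your route is the standard white-noise-space proof: independence of the $w_{\xi_i}$ from the polarised isometry plus joint Gaussianity, the classical orthogonality $\E[h_m(Z)h_n(Z)]=n!\,\delta_{mn}$ for the probabilists' Hermite polynomials (which matches the paper's normalisation $h_0=1$, $h_1=x$, $h_2=x^2-1$), and totality of the stochastic exponentials $\exp\br{w_f-\tfrac12\|f\|_{\Ltr}^2}$ established by analytic continuation in the parameters, uniqueness of Fourier transforms, and martingale convergence along $\sigma(w_{\xi_1},\dots,w_{\xi_n})\uparrow\F$. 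The cited proof in Holden et al.\ instead identifies $\H_\alpha$ with iterated It\^o integrals of symmetrised tensor products of Hermite functions and derives completeness from the It\^o--Wiener expansion; your argument buys independence from It\^o integration theory at the cost of the complex-analytic totality lemma. Two points you flag as delicate are indeed the ones to nail down, and both go through: the $\srv$-convergence of $\sum_{\alpha\in\J}\frac{a^\alpha}{\alpha!}\H_\alpha$ follows from $\sum_{\alpha}\frac{(a^\alpha)^2}{\alpha!}=e^{\|f\|_{\Ltr}^2}<\infty$ using the orthogonality already proved, and the entire extension of $(t_1,\dots,t_n)\mapsto\E[F\exp(\sum_i t_iw_{\xi_i})]$ follows from Cauchy--Schwarz and the exponential integrability of Gaussians. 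One small step worth making explicit is the identification $\F=\sigma(w_{\xi_1},w_{\xi_2},\dots)$ up to $\P$-null sets: it requires approximating each $f\in\Sw$ in $\Ltr$ by finite Hermite combinations and passing to an a.s.\ convergent subsequence of $w_{f_n}$, so that every evaluation map generating $\mathcal B(\Swd)$ is measurable for the completed $\sigma$-algebra.
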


Recall the familiar expansion for standard Brownian motion $B$ when $H=\frac{1}{2}$
\begin{equation}
\label{eq:Bseries}
    B(t)=\sum_{k=1}^\infty\left[\int_0^t\xi_k(s)ds\right] \calH_{\epsilon^{(k)}},   
\end{equation}
where ${\epsilon^{(k)}}$ denotes the multi-index that is $1$ in the $k$th place and 0 elsewhere. We now seek to construct a Wiener--It\^o expansion for fBm on the white noise probability space (see \eqref{eq: fbm expansion} below). 
To achieve this we introduce an operator $M_H$ on ${L^2(\R)}$. 

Let the Fourier transform of $f\in\calS(\R)$ be given by 
\begin{equation*}
    (\F f)(y)\coloneqq \int_\R e^{-ixy}f(x)dx.
\end{equation*}
For $H\in(0,1)$, let the operator $\M$ on $f\in\calS(\R)$ be defined by
\begin{equation}
\label{eq: M definition}
    (\F\M f)(y)=a_H|y|^{\frac{1}{2}-H}(\F f)(y),
\end{equation}
where $a_H\coloneqq\left[\Gamma(2H+1)\sin(\pi H)\right]^\frac{1}{2}.$
By applying the inverse Fourier transform on both sides of \eqref{eq: M definition}, we see that it indeed defines $\M$ uniquely. 

\begin{proposition}
    \label{prop: Schwartz in LH}
    Suppose $H\in(0,1)$. Then for any $f\in\calS(\R)$ we have $\M f\in{L^2(\R)}$.
\end{proposition}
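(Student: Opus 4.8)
The plan is to pass to the Fourier side and invoke Plancherel's theorem. By definition $\M f$ is the inverse Fourier transform of $g(y)\coloneqq a_H|y|^{\frac{1}{2}-H}(\F f)(y)$, and since $\F$ is, up to a multiplicative constant, an isometry of $\Ltr$ onto itself, proving $g\in\Ltr$ will simultaneously show that $\M f$ is well defined as an $\Ltr$ function and that $\M f\in\Ltr$, with $\|\M f\|_{\Ltr}^2=c\,\|g\|_{\Ltr}^2$ for the appropriate normalization constant $c>0$. Hence the entire statement reduces to the single estimate
\begin{equation*}
\int_\R \a\,|y|^{1-2H}\,|(\F f)(y)|^2\,dy<\infty.
\end{equation*}

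To control this integral I would split the domain into the bounded region $\{|y|\le 1\}$ and its complement $\{|y|>1\}$. On $\{|y|>1\}$ the weight $|y|^{1-2H}$ grows at worst polynomially (the exponent satisfies $1-2H<1$), while $\F f\in\Sw$ decays faster than any polynomial; thus the integrand is dominated by an integrable function and this piece is finite for every $H\in(0,1)$. On $\{|y|\le 1\}$ the factor $\F f$ is bounded, being continuous on a compact set, so the integrand is controlled by a constant multiple of $|y|^{1-2H}$, and it then remains only to check that $\int_{\{|y|\le 1\}}|y|^{1-2H}\,dy<\infty$.

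The hard part, such as it is, lies entirely in this last point: for $H>\tfrac{1}{2}$ the exponent $1-2H$ is negative and the weight has a singularity at the origin. The integral $\int_0^1 y^{1-2H}\,dy$ converges precisely when $1-2H>-1$, i.e.\ when $H<1$, which is exactly the hypothesis in force; for $H\le\tfrac{1}{2}$ the exponent is non-negative and there is no singularity at all. Combining the two regions yields the required finiteness of $\|g\|_{\Ltr}^2$, which completes the reduction and hence establishes $\M f\in\Ltr$.
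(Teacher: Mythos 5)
Your proof is correct and follows essentially the same route as the paper: both reduce the claim via Plancherel to the square-integrability of $|y|^{\frac{1}{2}-H}(\F f)(y)$, handle the origin by the integrability of $|y|^{1-2H}$ for $H\in(0,1)$ (the paper's condition $|p|<1$ with $p=1-2H$), and use the Schwartz decay of $\F f$ away from the origin. Your version merely makes the split into $\{|y|\le 1\}$ and $\{|y|>1\}$ explicit where the paper leaves it implicit.
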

\begin{proof}
    The Fourier transform maps the Schwartz space onto itself. Combined with the fact that 
\begin{equation*}
    \int_{-\epsilon}^\epsilon |y|^pdy<\infty,
\end{equation*}
for any $\epsilon>0$ and $|p|<1$, we see that $\F\M f\in{L^2(\R)}$ for any $f\in\calS(\R)$, where $\F\M f$ is given by \eqref{eq: M definition}. Since the (inverse) Fourier transform maps ${L^2(\R)}$ onto ${L^2(\R)}$, we conclude that $\M f \in{L^2(\R)}$ for any $f\in\calS(\R)$.
\end{proof}

\begin{remark}\label{rem:Mhalf}
    For $H=\frac{1}{2}$ we have that $a_{\frac{1}{2}}=1$ and thus 
        $(\F\M f)(y)=(\F f)(y)$.
    Therefore, $M_{\frac{1}{2}}$ is the identity operator on ${L^2(\R)}$.
\end{remark}

Since the Fourier transform and its inverse can be extended to ${L^2(\R)}$, the operator $\M$ can be extended to 
\begin{equation*}
    {L_H^2(\R)}\coloneqq\{f\colon\R\to\R \,;\, |y|^{\frac{1}{2}-H}(\F f)(y)\in {L^2(\R)}\}.
\end{equation*}
The (inverse) Fourier transform maps ${L^2(\R)}$ onto itself, so that we can also write
\begin{equation*}
    {L_H^2(\R)}=\{f\colon\R\to\R \,;\, \M f\in{L^2(\R)}\}.
\end{equation*}
By Proposition~\ref{prop: Schwartz in LH} it is indeed true that $\calS(\R)\subset{L_H^2(\R)}$. We equip ${L_H^2(\R)}$ with the inner product 
\begin{equation*}
    \langle f,g\rangle_H\coloneqq \langle \M f,\M g\rangle_{L^2(\R)}.
\end{equation*}

It can be shown that for any choice of $t\in\R$ we have that $\M\ind{t}\in{L^2(\R)}$, which implies that $\ind{t}\in{L_H^2(\R)}$. See Example 4.1.4 in \cite{Biagini2008} for more details.

The operator $\M$ is designed such that the following two results hold \cite[see, for example,][]{Mishura2008}.
\begin{proposition}
\label{prop: IP is iso}
For $t,s\geq 0$  
we have
    \begin{equation*}
        \langle\ind{t},\ind{s}\rangle_H=\langle \M\ind{t},\M\ind{s}\rangle_{L^2(\R)}=\frac{1}{2}(t^{2H}+s^{2H}-|t-s|^{2H}).
    \end{equation*}
\end{proposition}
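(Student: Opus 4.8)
The plan is to pass to the Fourier side, where $\M$ acts as multiplication by $a_H|y|^{\frac12-H}$, and then evaluate the resulting singular integral explicitly. Since the Fourier transform normalisation used here satisfies the Plancherel identity $\langle u,v\rangle_\Ltr=\frac{1}{2\pi}\langle \F u,\F v\rangle_\Ltr$, and $\M\ind{t}\in\Ltr$ by the discussion preceding the statement, I would first write
\begin{equation*}
\langle\M\ind{t},\M\ind{s}\rangle_\Ltr=\frac{\a}{2\pi}\int_\R |y|^{1-2H}\,(\F\ind{t})(y)\,\overline{(\F\ind{s})(y)}\,dy.
\end{equation*}
A direct computation gives $(\F\ind{t})(y)=\frac{1-e^{-ity}}{iy}$, so that the factor $|y|^{1-2H}(\F\ind{t})(y)\overline{(\F\ind{s})(y)}$ collapses, after cancelling the $y^{-2}$ coming from the two denominators, to $\frac{(1-e^{-ity})(1-e^{isy})}{|y|^{1+2H}}$.

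Next, since the left-hand side is a real inner product of real functions, I would replace the integrand by its real part; the odd imaginary part integrates to zero by symmetry, leaving the numerator $1-\cos(ty)-\cos(sy)+\cos((t-s)y)$. The crucial algebraic manoeuvre is the decomposition
\begin{equation*}
1-\cos(ty)-\cos(sy)+\cos((t-s)y)=[1-\cos(ty)]+[1-\cos(sy)]-[1-\cos((t-s)y)],
\end{equation*}
which splits the integral into three pieces of identical form. Each piece $\int_\R\frac{1-\cos(uy)}{|y|^{1+2H}}\,dy$ is absolutely convergent for $H\in(0,1)$: near $y=0$ the numerator is $O(y^2)$, so the integrand behaves like $|y|^{1-2H}$ (integrable since $2H<2$), while for large $|y|$ it decays like $|y|^{-1-2H}$ (integrable since $2H>0$). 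This justifies the term-by-term splitting, and a scaling substitution $z=uy$ then gives $\int_\R\frac{1-\cos(uy)}{|y|^{1+2H}}\,dy=|u|^{2H}\int_\R\frac{1-\cos z}{|z|^{1+2H}}\,dz$.

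It then remains to evaluate the constant $\int_\R\frac{1-\cos z}{|z|^{1+2H}}\,dz=2\int_0^\infty\frac{1-\cos z}{z^{1+2H}}\,dz$. Invoking the standard formula $\int_0^\infty\frac{1-\cos z}{z^{1+2H}}\,dz=\frac{\pi}{2\Gamma(2H+1)\sin(\pi H)}$, valid for $0<2H<2$, this constant equals $\frac{\pi}{\Gamma(2H+1)\sin(\pi H)}=\frac{\pi}{\a}$. Substituting back, the prefactor $\frac{\a}{2\pi}$ cancels precisely against $\frac{\pi}{\a}$, and one is left with $\frac12(t^{2H}+s^{2H}-|t-s|^{2H})$, as claimed. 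I expect the main obstacle to be the rigorous evaluation of this singular integral and the verification that the normalising constant $a_H=[\Gamma(2H+1)\sin(\pi H)]^{1/2}$ has been chosen exactly so that these factors collapse to $\tfrac12$; by contrast, the integrability of each piece and the vanishing of the imaginary part are routine. As a sanity check, at $H=\tfrac12$ one has $a_{1/2}=1$ and recovers $\min(t,s)$, the Brownian covariance, consistent with Remark \ref{rem:Mhalf}, where $M_{1/2}$ is the identity.
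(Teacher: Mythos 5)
Your argument is correct: the passage to the Fourier side with the $\tfrac{1}{2\pi}$ Plancherel factor, the identity $(\F\ind{t})(y)=\frac{1-e^{-ity}}{iy}$, the splitting of the real part into three absolutely convergent integrals of the form $\int_\R\frac{1-\cos(uy)}{|y|^{1+2H}}\,dy$, the scaling that extracts $|u|^{2H}$, and the evaluation $\int_0^\infty\frac{1-\cos z}{z^{1+2H}}\,dz=\frac{\pi}{2\Gamma(2H+1)\sin(\pi H)}$ all check out, and the normalisation $a_H^2=\Gamma(2H+1)\sin(\pi H)$ indeed makes the prefactors collapse to $\tfrac12$. The paper itself gives no proof of this proposition --- it is stated with a pointer to the literature --- and your Fourier computation is essentially the standard argument given in those references, so you have simply supplied the details the paper omits.
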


\begin{corollary}
\label{lemma: preitsafbm}
    For $t\geq s\geq0$ we have
    \begin{equation*}
        \|\M\mathbf{1}_{[s,t]}\|_{L^2(\R)}^2=(t-s)^{2H}.
    \end{equation*}
\end{corollary}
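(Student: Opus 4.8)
The plan is to reduce everything to Proposition \ref{prop: IP is iso} by exploiting the linearity of the operator $\M$ together with the decomposition $1_{[s,t]}=\ind{t}-\ind{s}$ valid for $t\geq s\geq 0$. Since $\M$ is linear on $\LH$, this immediately gives $\M 1_{[s,t]}=\M\ind{t}-\M\ind{s}$, and both terms on the right lie in $\Ltr$ by the remarks preceding the statement (that $\M\ind{t}\in\Ltr$ for every $t$).

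The second step is to expand the squared norm using bilinearity of the $\Ltr$ inner product:
\begin{equation*}
\|\M 1_{[s,t]}\|_\Ltr^2=\langle\M\ind{t}-\M\ind{s},\,\M\ind{t}-\M\ind{s}\rangle_\Ltr=\|\M\ind{t}\|_\Ltr^2-2\langle\M\ind{t},\M\ind{s}\rangle_\Ltr+\|\M\ind{s}\|_\Ltr^2.
\end{equation*}
I would then evaluate each of the three inner products via Proposition \ref{prop: IP is iso}. Taking $s=t$ in that proposition yields $\|\M\ind{t}\|_\Ltr^2=t^{2H}$, and likewise $\|\M\ind{s}\|_\Ltr^2=s^{2H}$; the cross term is $\langle\M\ind{t},\M\ind{s}\rangle_\Ltr=\tfrac12(t^{2H}+s^{2H}-|t-s|^{2H})$. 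Because $t\geq s$ we may drop the absolute value and write $|t-s|^{2H}=(t-s)^{2H}$.

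The final step is the algebraic simplification: substituting these three quantities gives
\begin{equation*}
\|\M 1_{[s,t]}\|_\Ltr^2=t^{2H}-\br{t^{2H}+s^{2H}-(t-s)^{2H}}+s^{2H}=(t-s)^{2H},
\end{equation*}
which is the claimed identity. There is no genuine obstacle here: the result is a direct corollary, and the only point requiring (minor) care is the sign convention, namely using $t\geq s$ to replace $|t-s|^{2H}$ by $(t-s)^{2H}$, and confirming that the membership $\M\ind{t},\M\ind{s}\in\Ltr$ justifies the inner-product expansion. This also recovers the expected scaling $\E[(\fbm{t}-\fbm{s})^2]=(t-s)^{2H}$ for fBm increments, consistent with the covariance structure introduced in Section \ref{sec: fBm}.
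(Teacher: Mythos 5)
Your proof is correct and follows exactly the route the paper intends: the corollary is stated as an immediate consequence of Proposition \ref{prop: IP is iso}, obtained by writing $1_{[s,t]}=\ind{t}-\ind{s}$, using linearity of $\M$, and expanding the squared norm. The algebra and the use of $t\geq s$ to drop the absolute value are both handled correctly, so there is nothing to add.
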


The operator $\M$ and Proposition~\ref{prop: IP is iso} give us the tools to introduce fBm to the white noise probability space and Wiener-It\^o chaos expansion.  Consider the process
\begin{equation}\label{eq:BH}
    \tilde B^H(t)\coloneqq w_{\M\ind{t}}.
\end{equation}
We have $\M\ind{t}\in{L^2(\R)}$, so that $\tilde B^H(t)$ is well-defined. The random variable $w_f$ has a normal distribution with mean $0$ for any $f\in{L^2(\R)}$. This means that $\tilde B^H(t)$ is a normal random variable with mean $0$. Recall that $f\mapsto w_f$ is an isometry. Therefore, using Proposition~\ref{prop: IP is iso}, we have 
\begin{equation*}
    \E[\tilde B^H(t)\tilde B^H(s)]=\langle \M\ind{t},\M\ind{s}\rangle_{L^2(\R)}=\frac{1}{2}(t^{2H}+s^{2H}-|t-s|^{2H}).
\end{equation*}
By Kolmogorov's continuity lemma there exists a continuous modification of $\tilde B^H(t)$ that is an fBm. As usual, we denote this modification as $B^H(t)$.

The Wiener--It\^o chaos expansion of $\fbm{t}$ is given by \cite[see][]{Mishura2003,Oksendal2007}
    \begin{equation}
        \label{eq: fbm expansion}
        \fbm{t}=\sum_{k=1}^\infty\left[\int_0^t(\M\xi_k)(s)ds\right]\calH_{\epsilon^{(k)}}.
    \end{equation}
This is consistent with the expansion in \eqref{eq:Bseries} for standard Brownian motion, recalling from Remark~\ref{rem:Mhalf} that $M_{\frac12}$ is the identity operator.

If we could exchange the summation and integration in \eqref{eq: fbm expansion}, we would obtain
\begin{equation*}
    \fbm{t}=\int_0^t\left[\sum_{k=1}^\infty(\M\xi_k)(s)\calH_{\epsilon^{(k)}}\right]ds.   
\end{equation*}
This suggests that the fBm would be differentiable and has as its derivative the process
\begin{equation}
    \label{eq: fractional white noise}
    W^H(t)\coloneqq\sum_{k=1}^\infty(\M\xi_k)(t)\calH_{\epsilon^{(k)}},
\end{equation}
called the \emph{fractional white noise process}.

Of course, the trajectories of $\fbm{t}$ are almost surely not differentiable \cite[Proposition 1.7.1]{Biagini2008}. The problem is rooted in the fact that $W^H(t)$ is not an element of ${L^2(\P)}$.  To make sure that the fBm actually has a derivative, we need to consider a larger space than ${L^2(\P)}$. This space is called the \emph{Hida distribution space} and is denoted by $(\calS)^*$ which can be interpreted as the dual of the \emph{Hida test function space} $(\calS)$. For brevity, we omit a thorough discussion, but see Appendix~\ref{sec:Hida} and, for example,  \cite{Holden1996}. We note however that the fractional white noise process $W^H(\cdot)\colon\R\to(\calS)^*$ maps to $(\calS)^*$ and the fBm interpreted as a mapping $B^H(\cdot)\colon\R\to(\calS)^*$ is differentiable, with 
    \begin{equation*}
        \frac{d}{dt}B^H(t)=W^H(t)  \quad \textup{ in } (\calS)^*.
    \end{equation*}

\subsection{Wick calculus}
\label{sec:WickCalculus}
The regular product is generally ill-defined on the Hida distribution space $(\calS)^*$. As a result, it is usually equipped with a different product. 

\begin{definition}
    Let $F=\sum_{\alpha\in\J}a_\alpha\calH_\alpha$ and $G=\sum_{\beta\in\J}b_\beta\calH_\beta$ be elements of $(\calS)^*$. The \emph{Wick product}, denoted by $\diamond$, is the bivariate mapping given by 
    \begin{equation*}
        F\diamond G=\sum_{\alpha,\beta\in\J}a_\alpha b_\beta\calH_{\alpha+\beta}.
    \end{equation*}
\end{definition}
\begin{remark}
    It is immediate that we can also write 
    \begin{equation*}
        F\diamond G=\sum_{\gamma\in\J}\left(\sum_{\alpha+\beta=\gamma}a_\alpha b_\beta\right)\calH_{\gamma}.
    \end{equation*}
\end{remark}

It can be shown that $(\calS)^*$ is closed under the Wick product, while ${L^2(\P)}$ is not \cite[for an example, see][Section 3.1]{Gjessing1993}. It is relatively easy to verify that the Wick product is associative, commutative and distributive over addition.

In some situations the Wick product can be expressed in terms of a regular product. For $c\in\R$ and $F=\sum_{\alpha\in\J}a_\alpha\calH_{\alpha}\in(\calS)^*$ we have 
\begin{equation} \label{eq:scalar wick}
    c\diamond F=\sum_{\alpha\in\J}ca_\alpha\calH_{\alpha}=c\sum_{\alpha\in\J}a_\alpha\calH_{\alpha}=c\cdot F.
\end{equation}
We denote powers with respect to the Wick product as $F^{\dm n}$, i.e. we inductively define
\begin{equation*}
    F^{\dm n}\coloneqq
    \begin{cases}
        F\dm F^{\dm (n-1)} &\textup{if } n\geq 2,\\
        F &\textup{if } n=1.
    \end{cases}
\end{equation*}
We set $F^{\dm 0}=1$ for convenience.

Suppose now that $f,g\in{L^2(\R)}$ and consider the random variables $w_f$ and $w_g$. A straightforward computation \cite[see, for example][Section 2.4]{Holden1996}, then shows that
\begin{equation*}
    w_f\dm w_g=w_f\cdot w_g-\langle f,g\rangle_{L^2(\R)}.
\end{equation*}
If we let $t,s\geq0$,  Proposition~\ref{prop: IP is iso} gives
\begin{align*}
    \fbm{t}\dm\fbm{s}&=w_{\M\ind{t}}\dm w_{\M\ind{s}}=w_{\M\ind{t}}\cdot w_{\M\ind{s}}-\langle\M\ind{t},\M\ind{s}\rangle_{{L^2(\R)}}\\
    &=\fbm{t}\cdot\fbm{s}-\frac{1}{2}\br{t^{2H}+s^{2H}-|t-s|^{2H}}.\qedhere
\end{align*}

We now state some useful results from \emph{Wick calculus}.
\begin{definition}
Suppose $f\colon\R\to\R$ is an analytic function with Taylor series $f(x)=\sum_{n=0}^\infty a_n x^{n}$. Then the \emph{Wick version} of $f$ evaluated at $X\in(\calS)^*$ is defined as
\begin{equation*}
    f^\dm(X)\coloneqq\sum_{n=0}^\infty a_n X^{\dm n},
\end{equation*}
as long as this sum converges in $(\calS)^*$.
\end{definition}
There exists a chain rule for Wick versions. 
\begin{theorem}[Wick chain rule]
Let $Z\colon \R\to(\calS)^*$ be continuously differentiable in $(\calS)^*$ and let $f\colon\R\to\R$ be analytic. Then 
\begin{equation*}
    \frac{d}{dt}f^\dm(Z(t))=(f')^\dm(Z(t))\dm\frac{d}{dt}Z(t).
\end{equation*}
\end{theorem}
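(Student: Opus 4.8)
The plan is to reduce the statement to a termwise differentiation of the defining series $f^\d(Z(t))=\sum_{n=0}^\infty a_n Z(t)^{\d n}$, for which I would first establish a \emph{Wick product rule}: for $F,G\colon\R\to\sstar$ that are continuously differentiable in $\sstar$,
$$\frac{d}{dt}\br{F(t)\d G(t)}=\frac{d}{dt}F(t)\d G(t)+F(t)\d\frac{d}{dt}G(t).$$
This follows from the purely algebraic difference-quotient identity $\frac{F(t+h)\d G(t+h)-F(t)\d G(t)}{h}=\frac{F(t+h)-F(t)}{h}\d G(t+h)+F(t)\d\frac{G(t+h)-G(t)}{h}$, which uses only the bilinearity and distributivity of $\d$ noted after the definition of the Wick product; letting $h\to0$ and invoking continuity of the Wick product on $\sstar$ together with the differentiability of $F$ and $G$ delivers the rule.

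From this a \emph{Wick power rule} $\frac{d}{dt}Z(t)^{\d n}=n\,Z(t)^{\d(n-1)}\d\frac{d}{dt}Z(t)$ follows by induction on $n$, using commutativity and associativity of $\d$ to collect the two terms produced at the inductive step. Differentiating the series termwise and factoring then gives
$$\frac{d}{dt}f^\d(Z(t))=\sum_{n=1}^\infty n\,a_n\,Z(t)^{\d(n-1)}\d\frac{d}{dt}Z(t)=\br{\sum_{n=1}^\infty n\,a_n\,Z(t)^{\d(n-1)}}\d\frac{d}{dt}Z(t),$$
and since $f'(x)=\sum_{n=1}^\infty n\,a_n x^{n-1}$ the bracketed series is exactly $(f')^\d(Z(t))$, which is the claimed identity.

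The hard part will be justifying the two interchanges hidden above: pulling $\frac{d}{dt}$ inside the infinite sum, and factoring the common $\d\frac{d}{dt}Z(t)$ out of it. Both require control in the topology of $\sstar$ (see Appendix~\ref{sec:Hida}), which is an inductive limit of Hilbert spaces carrying weighted norms $\|\cdot\|_{-p}$. I would fix a compact $t$-interval, choose a single index $p$ for which $Z(t)$ and $\frac{d}{dt}Z(t)$ remain bounded over that interval, and then use a Våge-type submultiplicative inequality of the form $\|F\d G\|_{-q}\le C\|F\|_{-p}\|G\|_{-p}$ to dominate both $\sum_n a_n Z(t)^{\d n}$ and its formally differentiated series $\sum_n n\,a_n Z(t)^{\d(n-1)}\d\frac{d}{dt}Z(t)$ by convergent numerical series uniformly in $t$. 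The standard theorem on termwise differentiation of a uniformly convergent Banach-space-valued series then legitimises the interchange. Producing this uniform domination, i.e. the summability of the resulting norm bounds against $|a_n|$ and $n|a_n|$ throughout the region where the Wick series converges, is the genuine technical crux, and it is precisely here that the detailed structure of the Hida norms enters.
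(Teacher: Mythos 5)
The paper does not prove this theorem itself: it states it and refers to \cite[Proposition 5.14]{DiNunno2009}, so there is no internal argument to compare against. Your skeleton (difference-quotient product rule $\Rightarrow$ power rule by induction $\Rightarrow$ termwise differentiation of the defining series) is the standard route and is essentially what the cited reference does; the paper even records the product rule separately as its own proposition with the remark that the proof is ``completely analogous'' to the real-valued case. So the approach is right, and you have correctly located where the actual mathematical content lies.

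That said, the crux you defer is not merely bookkeeping, and as sketched it has one soft spot. A V{\aa}ge-type inequality does not hold in the symmetric form $\|F\d G\|_{-q}\le C\|F\|_{-p}\|G\|_{-p}$ with a single index on the right; the usable version keeps one factor at the worse index, e.g. $\|F\d G\|_{-q}\le A(p,q)\,\|F\|_{-p}\,\|G\|_{-q}$ for suitable $q>p$. You need this asymmetric form to iterate across Wick powers without the index degrading with $n$: it yields $\|Z(t)^{\d n}\|_{-q}\le \br{A\|Z(t)\|_{-p}}^{n-1}\|Z(t)\|_{-q}$, i.e. geometric growth in $n$. If $f$ is entire this is harmless, but for a general analytic $f$ with finite radius of convergence the series $\sum_n |a_n|\br{A\|Z(t)\|_{-p}}^{n-1}$ need not converge even when $f^\d(Z(t))$ is assumed to exist, so the uniform domination you invoke for termwise differentiation is not automatic from convergence of the Wick series alone; you must either restrict to entire $f$ (which covers the paper's only use, the Wick exponential) or build the norm-summability of $\sum_n n|a_n|\,\|Z(t)^{\d(n-1)}\d \dot Z(t)\|_{-q}$ into the hypotheses. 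Finally, factoring $\d\,\frac{d}{dt}Z(t)$ out of the infinite sum requires separate continuity of the Wick product on $\sstar$ (true, but it is exactly the same V{\aa}ge estimate doing the work), so that step should be stated as a consequence of the same bound rather than taken for granted.
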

For a proof, see \cite[Proposition 5.14]{DiNunno2009}. 

\begin{proposition}[Wick product rule]
    Let $X\colon \R\to(\calS)^*$ and $Y\colon\R\to(\calS)^*$ both be differentiable in $(\calS)^*$. Then $X(t)\dm Y(t)$ is also differentiable in $(\calS)^*$, with
    \begin{equation*}
        \frac{d}{dt}\br{X(t)\dm Y(t)}=\br{\frac{d}{dt}X(t)}\dm Y(t)+X(t)\dm\br{\frac{d}{dt}Y(t)}.
    \end{equation*}
\end{proposition}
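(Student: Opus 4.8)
The plan is to mimic the classical proof of the Leibniz rule: form the difference quotient, split it using the bilinearity of the Wick product, and pass to the limit. The only genuinely infinite-dimensional ingredient is the continuity of $\d$ on $\sstar$, and that is where the work lies.

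First I would fix $t$ and $h\neq 0$ and use the distributivity of $\d$ over addition (established above) to obtain the telescoping identity
\begin{equation*}
X(t+h)\d Y(t+h)-X(t)\d Y(t)=\br{X(t+h)-X(t)}\d Y(t+h)+X(t)\d\br{Y(t+h)-Y(t)}.
\end{equation*}
Dividing by $h$ and invoking \eqref{eq:scalar wick}, which lets the scalar $1/h$ pass through the Wick product, this becomes
\begin{equation*}
\frac{X(t+h)\d Y(t+h)-X(t)\d Y(t)}{h}=\br{\frac{X(t+h)-X(t)}{h}}\d Y(t+h)+X(t)\d\br{\frac{Y(t+h)-Y(t)}{h}}.
\end{equation*}

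Next I would let $h\to0$ term by term. By hypothesis the difference quotients $\tfrac{1}{h}\br{X(t+h)-X(t)}$ and $\tfrac1h\br{Y(t+h)-Y(t)}$ converge in $\sstar$ to $\tfrac{d}{dt}X(t)$ and $\tfrac{d}{dt}Y(t)$ respectively; differentiability moreover forces $Y(t+h)\to Y(t)$, since $Y(t+h)-Y(t)=h\cdot\tfrac1h\br{Y(t+h)-Y(t)}\to 0$ as a scalar multiple of a bounded (convergent) family. Hence, provided $\d$ is continuous on $\sstar$, the right-hand side converges to $\br{\tfrac{d}{dt}X(t)}\d Y(t)+X(t)\d\br{\tfrac{d}{dt}Y(t)}$, while the left-hand side is by definition the difference quotient of $X\d Y$; this simultaneously establishes the asserted differentiability and the formula.

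The crux is therefore the continuity of the Wick product, which I expect to be the main obstacle. Here I would appeal to the structure of $\sstar$ as the union of the weighted Hilbert spaces making up the Hida scale (Appendix \ref{sec:Hida}, cf.\ \cite{Holden1996}): a sequence converging in $\sstar$ is contained in, and converges in the norm of, a single such space, and the Wick product obeys a bound of V\r age type, $\|F\d G\|_{-l}\le C\,\|F\|_{-p}\|G\|_{-q}$ for $l$ large enough relative to $p,q$. Writing $F_h\d G_h-F\d G=(F_h-F)\d G_h+F\d(G_h-G)$ and combining this estimate with the boundedness of the convergent family $G_h$ shows $F_h\d G_h\to F\d G$ whenever $F_h\to F$ and $G_h\to G$; applied to the two products above as $h\to0$ along an arbitrary sequence, this yields the required convergence. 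As an alternative I would note that one can bypass the topological bookkeeping entirely by applying the $S$-transform, under which $\d$ becomes ordinary pointwise multiplication and the statement reduces to the scalar Leibniz rule.
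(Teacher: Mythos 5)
Your argument is the classical Leibniz-rule proof via the telescoping difference quotient, which is exactly the approach the paper takes: its entire proof is the remark that the result is ``completely analogous to the proof of the product rule for differentiable functions on $\R$.'' You in fact go further than the paper by isolating and justifying the one genuinely non-classical ingredient, namely the continuity of the Wick product on $\sstar$ via the weighted-norm estimate on the Hida scale, so the proposal is correct and, if anything, more complete than the original.
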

The proof of this result is completely analogous to the proof of the product rule for differentiable functions on $\R$.

A Wick version that we are particularly interested in is the Wick exponential
\begin{equation}
    \wexp{X}=\sum_{n=0}^\infty \frac{1}{n!} X^{\dm n},
\end{equation}
where $X\in(\calS)^*$ is such that the sum converges in $(\calS)^*$. 

The Wick exponential shares some properties with the regular exponential. For example, if $F, G\in(\calS)^*$ are such that $\wexp{F}$, $\wexp{G}$ and $\wexp{F+G}$ exist, then we have
\begin{equation*}
    \wexp{F+G}=\wexp{F}\dm\wexp{G}.
\end{equation*}
Generally, the Wiener--It\^o chaos expansion of a random variable might not be known. In this case, calculating its Wick powers is problematic. Even if the expansion is readily available, the infinite sum might make it very complicated to evaluate the Wick exponential. Fortunately, the Wick exponential of Gaussian random variables in ${L^2(\P)}$ can be expressed in terms of a regular exponential. It is shown in \cite{Holden1996} that for $f\in{L^2(\R)}$ we have
\begin{equation}
\label{eq: wexp to exp}
    \wexp{w_f}=\exp\br{w_f-\|f\|_{L^2(\R)}^2}.
\end{equation}

\subsection{WIS integration}
\label{sec:WISIntegration}
Having introduced some Wick calculus, we can finally define an integral of a process $Y(t)$ with respect to $\fbm{t}$ for all $H\in(0,1)$. Recall the fractional white noise $W^H(t)$ introduced in \eqref{eq: fractional white noise}. Since it is the derivative of $\fbm{t}$ in $(\calS)^*$, it makes sense to use this to define the stochastic integral. To make the following definition rigorous, we need a notion of integrability in $(\calS)^*$. We will not discuss this notion of $(\calS)^*$-integrability here, but details can be found in \cite[Section 2.5]{Holden1996}. 

\begin{definition}
\label{Defn:WIS}
    Consider a stochastic process $Y\colon\R\to(\calS)^*$. Assume the Wick product $Y(t)\dm W^H(t)$ is $(\calS)^*$-integrable. Then Y is called \emph{Wick--Itô--Skorohod (WIS) integrable} and has the \emph{WIS integral}
    \begin{equation}
        \int_\R Y(t)dB^H(t)\coloneqq\int_\R Y(t)\dm W^H(t)dt.
    \end{equation}
\end{definition}

We define the WIS integral on a bounded interval $[a,b]$ as
\begin{equation*}
    \int_a^b Y(t)dB^H(t)\coloneqq\int_\R Y(t)1_{[a,b]}(t)dB^H(t).
\end{equation*}

Suppose $H=\frac{1}{2}$, so $B^{\frac{1}{2}}(t)=B(t)$ is a regular Brownian motion. Let $a,b\in\R$. If $Y\colon\R\to(\calS)^*$ is an $\F_t$-adapted process such that
    \begin{equation*}
        \int_a^b\E[Y(t)^2]dt<\infty,
    \end{equation*}
then $Y$ is both WIS and It\^o integrable over $[a,b]$, and the two integrals coincide, as shown in \cite[Section 2.5]{Holden1996}. The WIS integral can therefore be seen as an extension of the It\^o integral.

If $Y\colon\R\to(\calS)^*$ is such that $\int_\R Y(t)dB^H(t)\in{L^2(\P)}$, then 
\cite[Eq. (4.21)]{Biagini2008} says that
\begin{equation}
\label{eq: expectation WIS integral}
    \E\left[\int_\R Y(t)dB^H(t)\right]=0.
\end{equation}
Therefore, the WIS integral is centered.

Finally we end this section with the following It\^o formula for fBm from \cite{Bender2003}. 
\begin{theorem}[Fractional It\^o formula]
\label{thm: Ito formula}
    Suppose $H\in(0,1)$. Let $f\colon \R^2\to\R$ be such that $f\in C^{1,2}(\R^2)$, i.e. $f$ is once continuously differentiable in its first component and twice continuously differentiable in its second component. Furthermore, assume that 
    \begin{equation*}
        f(t,\fbm{t}), \qquad \int_0^t\frac{\partial f}{\partial s}(s,\fbm{s})ds, \qquad \int_0^t\frac{\partial^2 f}{\partial x^2}(s,\fbm{s})s^{2H-1}ds
    \end{equation*}
    are in ${L^2(\P)}$ for all $t\in\R$. In this case we have
    \begin{multline*}
        f(t,\fbm{t})=f(0,0)+\int_0^t\frac{\partial f}{\partial s}(s,\fbm{s})ds\\+\int_0^t\frac{\partial f}{\partial x}(s,\fbm{s})d\fbm{s}+H\int_0^t\frac{\partial^2 f}{\partial x^2}(s,\fbm{s})s^{2H-1}ds.
    \end{multline*}
\end{theorem}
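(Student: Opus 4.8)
The plan is to mirror the classical proof of the It\^o formula. Fix a partition $0=t_0<t_1<\cdots<t_n=t$ with mesh $\Dt\to 0$, write the telescoping sum $f(t,\fbm{t})-f(0,0)=\sum_{i}\br{f(t_{i+1},\fbm{t_{i+1}})-f(t_i,\fbm{t_i})}$, and Taylor expand each increment to first order in $s$ and second order in $x$ about $(t_i,\fbm{t_i})$. Before this, I would reduce by a localization/approximation argument to functions $f$ whose relevant partial derivatives are bounded and uniformly continuous; the stated $\srv$-integrability hypotheses then allow passage to the limit for general $f\in C^{1,2}$. The time-derivative sum $\sum_i \frac{\partial f}{\partial s}(t_i,\fbm{t_i})\,\Dt_i$ is an ordinary Riemann sum converging in $\srv$ to $\int_0^t \frac{\partial f}{\partial s}(s,\fbm{s})\,ds$, so the content lies in the two remaining terms.

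The crux is that the WIS integral is the limit of \emph{Wick}-product Riemann sums, not ordinary ones. I would first record the conversion identity, for $g\in\Ltr$ and smooth $\phi$,
\[
\phi(\fbm{t_i})\cdot w_g=\phi(\fbm{t_i})\d w_g+\phi'(\fbm{t_i})\,\langle \M\ind{t_i},g\rangle_\Ltr,
\]
which extends $w_f\d w_g=w_f w_g-\langle f,g\rangle_\Ltr$ to nonlinear functionals through the chaos expansion of $\phi(\fbm{t_i})$. Taking $\phi=\frac{\partial f}{\partial x}(t_i,\cdot)$ and $g=\M 1_{[t_i,t_{i+1}]}$ and evaluating the inner product by Proposition \ref{prop: IP is iso} as $\E[\fbm{t_i}(\fbm{t_{i+1}}-\fbm{t_i})]=\tfrac12\br{t_{i+1}^{2H}-t_i^{2H}-(\Dt_i)^{2H}}$, the first-order term splits as
\[
\sum_i \frac{\partial f}{\partial x}(t_i,\fbm{t_i})\d\br{\fbm{t_{i+1}}-\fbm{t_i}}+\tfrac12\sum_i \frac{\partial^2 f}{\partial x^2}(t_i,\fbm{t_i})\br{t_{i+1}^{2H}-t_i^{2H}-(\Dt_i)^{2H}},
\]
where the Wick sum converges to $\int_0^t \frac{\partial f}{\partial x}(s,\fbm{s})\,dB^H(s)$ by Definition \ref{Defn:WIS} and the standard approximation of the WIS integral by Wick Riemann sums.

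For the second-order Taylor term I would use $(\fbm{t_{i+1}}-\fbm{t_i})^2=(\fbm{t_{i+1}}-\fbm{t_i})^{\d2}+(\Dt_i)^{2H}$, a consequence of $w_g\d w_g=w_g^2-\|g\|_\Ltr^2$ and Corollary \ref{lemma: preitsafbm}. Substituting, the two $(\Dt_i)^{2H}$ contributions --- one from the Wick conversion above, one from the mean of the squared increment --- cancel exactly, leaving the surviving contribution $\tfrac12\sum_i \frac{\partial^2 f}{\partial x^2}(t_i,\fbm{t_i})\br{t_{i+1}^{2H}-t_i^{2H}}$, a Riemann--Stieltjes sum for $\tfrac12\,d(s^{2H})=H s^{2H-1}\,ds$, converging to $H\int_0^t \frac{\partial^2 f}{\partial x^2}(s,\fbm{s})\,s^{2H-1}\,ds$. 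This cancellation is the structural reason the formula holds uniformly over $H\in(0,1)$, with no case distinction at $H=\tfrac12$ (where the first-order correction vanishes by independence) nor between $H<\tfrac12$ and $H>\tfrac12$.

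The main obstacle will be the rigorous $\srv$-control of the errors for all $H$, where the roughness and long-range dependence of fBm make the estimates delicate. Writing $\Delta_i=\fbm{t_{i+1}}-\fbm{t_i}$, two families must be shown to vanish in $\srv$: the centered Wick-square fluctuation $\tfrac12\sum_i \frac{\partial^2 f}{\partial x^2}(t_i,\fbm{t_i})\,\Delta_i^{\d2}$, whose squared norm reduces, via the second-chaos identity $\E[\Delta_i^{\d2}\Delta_j^{\d2}]=2\br{\E[\Delta_i\Delta_j]}^2$, to a double sum of squared increment-covariances that must be bounded uniformly; and the higher-order Taylor remainders (cubic in $\Delta_i$ and the mixed and $\partial_{ss}$ terms). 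Controlling $\sum_{i,j}\br{\E[\Delta_i\Delta_j]}^2$ together with the moments $\E[|\Delta_i|^{2m}]=c_m(\Dt_i)^{2Hm}$ against the partition for small $H$, where $(\Dt)^{2H}$ is large relative to $\Dt$, is the technical heart. Alternatively one can bypass the partition entirely by verifying the identity after applying the injective $\S$-transform on a dense family of Wick-exponential test functions and extending by density, trading these moment estimates for continuity estimates in $\sstar$.
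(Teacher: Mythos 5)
The paper does not prove this theorem: it is imported verbatim from \cite{Bender2003}, so there is no in-paper argument to compare against. Judged on its own terms, your algebraic skeleton is right --- the conversion identity $\phi(\fbm{t_i})\cdot w_g=\phi(\fbm{t_i})\d w_g+\phi'(\fbm{t_i})\langle \M\ind{t_i},g\rangle_\Ltr$, the decomposition $\Delta_i^2=\Delta_i^{\d 2}+\Dt_i^{2H}$, and the cancellation that leaves $\tfrac12\sum_i \partial_{xx}f\,(t_{i+1}^{2H}-t_i^{2H})$ are all correct and explain where the $Hs^{2H-1}$ term comes from. But the main route you propose cannot prove the theorem as stated, i.e.\ for \emph{all} $H\in(0,1)$, and the failure is quantifiable. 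Your own identity $\E[\Delta_i^{\d2}\Delta_j^{\d2}]=2(\E[\Delta_i\Delta_j])^2$ gives, already from the diagonal terms on a uniform partition,
\begin{equation}
\E\Bigl[\Bigl(\sum_{i}\Delta_i^{\d 2}\Bigr)^{2}\Bigr]\;\geq\;2\sum_{i}\bigl(\E[\Delta_i^2]\bigr)^2=2\,n\,\Dt^{4H}=2T\,\Dt^{4H-1},
\end{equation}
which does not tend to $0$ for $H\le\tfrac14$ and diverges for $H<\tfrac14$. So the ``centered Wick-square fluctuation'' does not vanish in $\srv$ for small $H$, even for $f(t,x)=x^2$ where there is no Taylor remainder at all; equivalently, the forward Wick--Riemann sums $\sum_i\partial_xf(t_i,\fbm{t_i})\d\Delta_i$ do \emph{not} converge in $\srv$ to the WIS integral when $H\le\tfrac14$ --- the ``standard approximation'' you invoke is simply false in that regime. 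Independently, the cubic Taylor remainder is of size $\sum_i\E[|\Delta_i|^3]\sim T\Dt^{3H-1}$, which already obstructs $H<\tfrac13$ unless further cancellations are extracted. These are not technicalities to be tightened; they are the reason no finite-order Taylor/Riemann-sum argument can reach small $H$.

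The one-sentence ``alternative'' you mention at the end --- verify the identity after applying the injective $\S$-transform to Wick-exponential test functionals, where everything becomes a deterministic computation with the heat-type kernel, and conclude by injectivity and density --- is not an alternative but the actual proof in \cite{Bender2003}, and it is the only route in your proposal that covers all $H\in(0,1)$. If you want a complete argument you should promote that to the main line and discard the partition approach, or else restrict the Riemann-sum proof honestly to $H>\tfrac14$ (with additional work on the remainder for $H<\tfrac13$).
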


\subsection{Geometric fBm} \label{sec:gfBm}
Now that we have a notion of integration with respect to fBm, we can consider SDEs with such an integral. Let us first consider the basic example of the geometric fBm (gfBm)
\begin{equation}
\label{eq: fGBM SDE}
    dX(t)=\alpha X(t)dt+\beta X(t)dB^H(t),
\end{equation}
where $\alpha,\beta\in\R$, $t\geq 0$ and the stochastic integral is a WIS integral. This notation is a shorthand for the integral equation
\begin{align}
\label{eq: integral gfbm}
    X(t)&=X(0)+\alpha \int_0^tX(s)ds+\beta \int_0^t X(s)dB^H(s) \nonumber \\
    &=X(0)+\alpha \int_0^tX(s)ds+\beta \int_0^t X(s)\dm W^H(s)ds.
\end{align}
We are working in $(\calS)^*$ due to the presence of the WIS integral. This means that a solution $X(t)$ is an element of $(\calS)^*$ for every $t\geq0$. Instead of the integral equation \eqref{eq: integral gfbm}, we can consider the differential equation
\begin{equation}
\label{eq: geometric in diff}
    \frac{d}{dt}X(t)=\alpha X(t)dt+\beta X(t)\dm W^H(t)=X(t)\dm\left(\alpha+\beta W^H(t)\right),
\end{equation}
where the derivative is taken in $(\calS)^*$. From the form of this differential equation and the fact that we are working in $(\calS)^*$, which is equipped with the Wick product, we can make the educated guess that the solution is given by
\begin{equation}
\label{eq: gfbm Wick}
    X(t)=X(0)\dm\wexp{\alpha t+\beta B^H(t)}.
\end{equation}
Note that 
\begin{equation*}
\frac{d}{dt}(\alpha t+\beta B^H(t))=\alpha+\beta W^H(t),
\end{equation*}
so using the Wick product rule and the Wick chain rule we get
\begin{align*}
    \frac{d}{dt}X(t)&=\frac{d}{dt}\left(X(0)\dm\wexp{\alpha t+\beta B^H(t)}\right)=X(0)\dm\br{\frac{d}{dt}\wexp{\alpha t+\beta B^H(t)}}\\
    &=X(0)\dm\wexp{\alpha t+\beta B^H(t)}\dm\left(\alpha+\beta W^H(t)\right)=X(t)\dm\left(\alpha+\beta W^H(t)\right).
\end{align*}
Therefore, $X(t)$ in \eqref{eq: gfbm Wick} is indeed a solution to \eqref{eq: geometric in diff}. 

We can now use \eqref{eq: wexp to exp} to rewrite the Wick exponential in $X(t)$.  Since from \eqref{eq:BH} $\fbm{t}=w_{\M\ind{t}}$, we have $\beta\fbm{t}=w_{\beta\M\ind{t}}$. Therefore,
\begin{align} \label{eq:expBh}
    \wexp{\beta B^H(t)}&=\wexp{w_{\beta\M\ind{t}}}=\exp\br{w_{\beta\M\ind{t}}-\frac{1}{2}\|\beta\M\ind{t}\|_{L^2(\R)}^2} \nonumber \\
    &=\exp\br{\beta w_{\M\ind{t}}-\frac{1}{2}\beta^2\|\M\ind{t}\|_{L^2(\R)}^2}=\exp\br{\beta\fbm{t}-\frac{1}{2}\beta^2t^{2H}}, 
\end{align}
where the norm is evaluated using Corollary~\ref{lemma: preitsafbm}.
Recall that from \eqref{eq:scalar wick} the Wick product reduces to the regular product when one of the factors is in $\R$. Therefore, $\wexp{\alpha t}=\exp(\alpha t)$. We can now rewrite the Wick exponential as
\begin{align*}
    \wexp{\alpha t+\beta B^H(t)}&=\wexp{\alpha t}\dm\wexp{\beta B^H(t)}=\exp(\alpha t)\exp\left(\beta B^H(t)-\frac{1}{2}\beta^2t^{2H}\right)\\
    &=\exp\left(\alpha t-\frac{1}{2}\beta^2t^{2H}+\beta B^H(t)\right).
\end{align*}
In the case of a deterministic initial value $X(0)=x_0\in\R$, the solution \eqref{eq: gfbm Wick} becomes
\begin{equation}
\label{eq: gfbm}
    X(t)=x_0\exp\left(\alpha t-\frac{1}{2}\beta^2t^{2H}+\beta B^H(t)\right).
\end{equation}
This process is called the \emph{geometric fractional Brownian motion} (gfBm) and for $H=\frac12$ coincides with the regular geometric Brownian motion. The fact that $X(t)$ is the solution to \eqref{eq: fGBM SDE} can also be verified using the fractional It\^o formula of Theorem \ref{thm: Ito formula}.

\subsection{Translations and Gjessing's Lemma}
To examine existence and uniqueness for a quasilinear SDE and to formulate a numerical method we require a technical result called Gjessing's Lemma.

Recall that we have the white noise probability space $\Omega=\calS'(\R)$ and the Schwarz space $\calS(\R)$. We can interpret ${L^2(\R)}$ as a subset of $\calS'(\R)$ by considering the embedding $f\hookrightarrow A_f$, where $A_f$ is the linear operator that acts on $g\in\calS(\R)$ as 
\begin{equation}
\label{eq: Schwartz embedding}
    A_f g\coloneqq \int_\R f(x)g(x)dx.
\end{equation}

Suppose $\omega_0\in\Omega=\calS'(\R)$. The translation operator $\T_{\omega_0}\colon(\calS)\to(\calS)$ is defined by
\begin{equation*}
    (\T_{\omega_0}X)(\omega)\coloneqq X(\omega+\omega_0), \qquad \textup{ for all } \omega\in\Omega.
\end{equation*}
Here $(\calS)$ is the Hida test function space (see Appendix~\ref{sec:Hida}).
Theorem~4.15 in \cite{Hida1993} shows that $\T_{\omega_0}$ indeed maps $(\calS)$ onto itself, and does so continuously, for all $\omega_0\in\Omega$. 

Let us now fix some $p>1$. We can extend the definition of the translation operator $\T_{\omega_0}$, $\omega_0\in\Omega$, to $L^p(\P)$ by simply setting $\br{\T_{\omega_0}X}(\omega)\coloneqq X(\omega+\omega_0)$ for all $\omega\in\Omega$, where $X\in L^p(\P)$. The translation operator generally does not map $L^p(\P)$ onto itself. However, if $f\in{L^2(\R)}\subset\calS'(\R)$, then \cite[Corollary 2.10.5]{Holden1996} shows that for all $q<p$ we have $\T_fX\in L^q(\P)$ if $X\in L^p(\P)$.
Note that the translation operator has an obvious inverse, since 
\begin{equation}
\label{eq: translation operator inverse}
    \br{\T_{-\omega_0}\br{\T_{\omega_0}X}}(\omega)=X(\omega).
\end{equation}

This translation operator can be used in certain cases to remove a Wick product. This result is crucial for evaluating quasilinear SDEs. The first version of the result was stated as Theorem~2.10 by \cite{Gjessing1993manu}. Later it was generalized by \cite{Benth2000}. For a proof of the result as stated below, we refer to Theorem~2.10.7 in \cite{Holden1996}.

\begin{theorem}[Gjessing's Lemma]
\label{thm: Gjessing's Lemma}
Let $p>1$. Suppose that $f\in{L^2(\R)}$ and $X\in L^p(\P)$. Then $\wexp{w_f}\dm X\in L^q(\P)$ for all $q<p$. Moreover, 
\begin{equation*}
    \wexp{w_f}\dm X=\wexp{w_f} \cdot (\T_{-f}X), \qquad \textup{a.s.}.
\end{equation*}
\end{theorem}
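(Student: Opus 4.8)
The plan is to verify the claimed identity inside the Hida distribution space $\sstar$ by comparing the $S$-transforms of the two sides, and to read off the integrability statement from the right-hand side. Throughout I use the ordinary-exponential representation of the Wick exponential, $\wexp{w_f}=\exp\br{w_f-\tfrac12\|f\|_\Ltr^2}$, which is the form already exploited in \eqref{eq: wexp to exp} and \eqref{eq:expBh}.

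First I would settle that the right-hand side is well defined and lies in $L^q(\P)$. Since $f\in\Ltr\subset\Swd$, the cited result \cite[Corollary 2.10.5]{Holden1996} gives $\T_{-f}X\in L^{q_1}(\P)$ whenever $X\in L^p(\P)$ and $q_1<p$. The factor $\wexp{w_f}$ is a Gaussian exponential, hence lies in $L^r(\P)$ for every $r<\infty$. Given $q<p$, choosing $q_1\in(q,p)$ and $r$ with $\tfrac1q=\tfrac1{q_1}+\tfrac1r$ and applying H\"older's inequality shows $\wexp{w_f}\cdot\T_{-f}X\in L^q(\P)$. This establishes the integrability claim once the identity is proved, and realizes the right-hand side as a genuine element of $\sstar$ through the continuous embedding $L^q(\P)\hookrightarrow\sstar$ (valid for all $q\geq1$ because $\hida\subset\bigcap_r L^r(\P)$).

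Next I would reduce the equality to an $S$-transform identity. Both $\wexp{w_f}$ and $X$ lie in $\sstar$, so the Wick product $\wexp{w_f}\d X$ is a well-defined element of $\sstar$, and an element of $\sstar$ is uniquely determined by its $S$-transform $(SF)(\phi)=\E\l[F\cdot\wexp{w_\phi}\r]$, $\phi\in\Sw$ (characterization theorem, \cite{Holden1996}). Since the $S$-transform turns Wick products into ordinary products, it suffices to prove
\begin{equation*}
S\br{\wexp{w_f}\d X}(\phi)=S\br{\wexp{w_f}\cdot\T_{-f}X}(\phi)\qquad\text{for all }\phi\in\Sw.
\end{equation*}
A direct Gaussian computation gives $S\br{\wexp{w_f}}(\phi)=\exp\br{\langle f,\phi\rangle_\Ltr}$, so the left-hand side equals $\exp\br{\langle f,\phi\rangle_\Ltr}\,(SX)(\phi)$. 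For the right-hand side one computes $S\br{\wexp{w_f}\cdot\T_{-f}X}(\phi)=\E\l[\wexp{w_f}\cdot(\T_{-f}X)\cdot\wexp{w_\phi}\r]$, and the key input is the Cameron--Martin quasi-invariance of the white noise measure under the $\Ltr$-shift by $g$: because the associated Radon--Nikodym derivative is exactly $\wexp{w_g}$, one has $\E\l[\br{\T_g F}\r]=\E\l[F\cdot\wexp{w_g}\r]$ for $g\in\Ltr$. Combining this with the multiplicative rule $\wexp{w_f}\cdot\wexp{w_\phi}=\exp\br{\langle f,\phi\rangle_\Ltr}\,\wexp{w_{f+\phi}}$ and applying the shift identity twice collapses the translation, yielding $\exp\br{\langle f,\phi\rangle_\Ltr}\,\E\l[X\cdot\wexp{w_\phi}\r]=\exp\br{\langle f,\phi\rangle_\Ltr}\,(SX)(\phi)$. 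The two $S$-transforms coincide, so the identity follows.

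I expect the main obstacle to be the quasi-invariance computation: carefully justifying on the infinite-dimensional space $\Swd$ that the Radon--Nikodym derivative of the shift equals $\wexp{w_f}$ and that the integrability from the first step is enough to push the translation through the expectation, together with checking that the $S$-transform characterization legitimately applies to $\wexp{w_f}\d X$ via the embedding $L^q(\P)\hookrightarrow\sstar$. An alternative that avoids the $S$-transform machinery is to verify the identity directly for exponential vectors $X=\wexp{w_g}$, $g\in\Ltr$ (both sides reduce to $\wexp{w_{f+g}}$), and then extend by density of their linear span in $L^p(\P)$ using the continuity of $X\mapsto\wexp{w_f}\cdot\T_{-f}X$ from $L^p(\P)$ to $L^q(\P)$ established in the first step; there the obstacle shifts to justifying convergence of the corresponding Wick products in $\sstar$.
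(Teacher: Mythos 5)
The paper does not actually prove this statement: it is quoted verbatim from the literature, with the proof deferred to Theorem 2.10.7 of Holden et al.\ (and the original sources of Gjessing and Benth--Gjessing). So there is no in-paper argument to compare against; what can be said is that your sketch is a correct and standard route to the result. Your integrability step is sound: $\T_{-f}X\in L^{q_1}(\P)$ for $q_1<p$ by the cited Corollary 2.10.5, $\wexp{w_f}=\exp\br{w_f-\tfrac12\|f\|_\Ltr^2}$ lies in every $L^r(\P)$, and H\"older with $q_1$ arbitrarily close to $p$ gives membership in $L^q(\P)$ for every $q<p$. Your $S$-transform computation also checks out: $S\br{\wexp{w_f}}(\phi)=\exp\br{\langle f,\phi\rangle_\Ltr}$, the Wick product factorizes under $S$, and on the other side the combination of $\wexp{w_f}\cdot\wexp{w_\phi}=\exp\br{\langle f,\phi\rangle_\Ltr}\wexp{w_{f+\phi}}$ with the Cameron--Martin identity $\E\l[\T_gF\r]=\E\l[F\cdot\wexp{w_g}\r]$ (together with multiplicativity of $\T_{-f}$ over pointwise products, so that $\br{\T_{-f}X}\cdot\wexp{w_{f+\phi}}=\T_{-f}\br{X\cdot\T_f\wexp{w_{f+\phi}}}$) collapses everything to $\exp\br{\langle f,\phi\rangle_\Ltr}\,(SX)(\phi)$; injectivity of the $S$-transform on $\sstar$ finishes it. The two points you flag are indeed the only ones needing care: the quasi-invariance of the white noise measure under $\Ltr$-shifts with Radon--Nikodym derivative $\wexp{w_g}$ (which for $F\in L^p(\P)$, $p>1$, is covered by H\"older since $\wexp{w_g}\in\bigcap_{r<\infty}L^r(\P)$), and the fact that for elements of $L^q(\P)$, $q>1$, the dual-pairing definition of the $S$-transform agrees with the expectation $\E\l[F\cdot\wexp{w_\phi}\r]$. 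Your alternative route --- verifying the identity on exponential vectors $X=\wexp{w_g}$ and extending by density of their span in $L^p(\P)$ using the $L^p\to L^q$ continuity of $X\mapsto\wexp{w_f}\cdot\T_{-f}X$ --- is essentially the strategy of the textbook proof the paper cites, so either version would serve as a legitimate self-contained replacement for the citation.
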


\section{Quasilinear SDE and a numerical method}\label{sec: quasilinear}

In this section we consider quasilinear SDEs of the form \eqref{eq: quasi-linear SDE} with fBm as driving noise. This is a generalization of the geometric fBm discussed in Section~\ref{sec:gfBm}. The SDE does not in general have a closed form solution, so that numerical methods are needed. The formulation of the numerical method is based on \cite[Section 3.4.2]{Mishura2008} where $\alpha=0$ and $\beta(t)$ can be time dependent.

\subsection{Quasilinear SDEs}
Thanks to Section~\ref{sec:WISIntegration} we can now interpret \eqref{eq: quasi-linear SDE} as the differential equation
\begin{equation}
\label{eq: quasi-linear SDE differential}
\frac{d}{dt}X(t)=\alpha X(t)+a(t,X(t))+\beta X(t)\dm W^H(t), \quad X(0)=x_0, \quad t\in[0,T] \quad \textup{ in } (\calS)^*.
\end{equation}
To ensure existence and uniqueness we make the following assumption on $a$.
\begin{assumption}
\label{ass:a_existence}
The function $a\colon \R\to\R$ satisfies the following two conditions. There exists a constant $C>0$ such that 
\begin{enumerate}
    \item\label{enum: cond1} \emph{Linear growth:} $|a(t,x)|\leq C(1+|x|)$ for all $x\in\R$, $t\in[0,T]$.
    \item\label{enum: cond2} \emph{Lipschitz continuity:} $|a(t,x)-a(t,y)|\leq C|x-y|$ for all $x,y\in\R$, $t\in[0,T]$.
\end{enumerate}
\end{assumption}

The quasilinear SDE in \eqref{eq: quasi-linear SDE differential} has a unique solution, as shown in Section 3.4.2 of \cite{Mishura2008} using the ideas of Section 3.6 of \cite{Holden1996}.

\begin{theorem}
\label{thm: quasilinear SDE}
    The quasilinear SDE \textup{\eqref{eq: quasi-linear SDE differential}} 
    satisfying Assumption~\ref{ass:a_existence}
    has a unique solution $X(t)$ on $[0,T]$. Moreover, we have $X(t)\in L^p(\R)$ for all $p\geq 1$, $t\in[0,T]$.
\end{theorem}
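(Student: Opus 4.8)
The plan is to reduce the Wick SDE \eqref{eq: quasi-linear SDE differential} to a pathwise random ODE by a Wick variation-of-constants argument combined with Gjessing's Lemma (Theorem \ref{thm: Gjessing's Lemma}). First I would introduce the integrating factor $J(t)=\wexp{\alpha t+\beta B^H(t)}$, which by the computation of Section \ref{sec:gfBm} solves the homogeneous equation $\frac{d}{dt}J(t)=\alpha J(t)+\beta J(t)\d W^H(t)$ and, by \eqref{eq:expBh}, equals the ordinary random variable $\exp\br{\alpha t-\frac12\beta^2 t^{2H}+\beta B^H(t)}$, with Wick inverse $J(t)^{\d(-1)}=\wexp{-\alpha t-\beta B^H(t)}$. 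Writing the sought solution as $X(t)=Y(t)\d J(t)$ and applying the Wick product rule, the derivative of the integrating factor cancels the $\alpha X(t)$ and $\beta X(t)\d W^H(t)$ terms, leaving $\br{\frac{d}{dt}Y(t)}\d J(t)=a\br{t,Y(t)\d J(t)}$, that is,
\begin{equation*}
    \frac{d}{dt}Y(t)=a\br{t,Y(t)\d J(t)}\d J(t)^{\d(-1)}.
\end{equation*}

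Next I would eliminate the remaining Wick products. Since $J(t)=e^{\alpha t}\wexp{w_{g_t}}$ with $g_t\coloneqq\beta\M\ind{t}\in\Ltr$ (viewed as an element of $\Swd$ through the embedding \eqref{eq: Schwartz embedding}), two applications of Gjessing's Lemma turn the Wick products against $J(t)$ and $J(t)^{\d(-1)}$ into ordinary products composed with the translation operators $\T_{\mp g_t}$. Because $a$ acts by composition while $\T_{\pm g_t}$ is multiplicative and cancels its inverse, the equation becomes a genuine pathwise ODE of the schematic form $\frac{d}{dt}Y(t,\omega)=c_1(t,\omega)\,a\br{t,c_2(t,\omega)\,Y(t,\omega)}$, whose right-hand side, for fixed $\omega$, inherits global Lipschitz continuity and linear growth in the state from Assumption \ref{ass:a_existence}\ref{enum: cond1}--\ref{enum: cond2}, with a $t$-continuous (hence on $[0,T]$ bounded) random Lipschitz constant. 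The classical Picard--Lindel\"of theorem then gives, for almost every $\omega$, a unique global solution $Y(\cdot,\omega)$ on $[0,T]$; undoing the substitution via $X(t)=Y(t)\d J(t)$ and checking with the Wick product and chain rules recovers a solution of \eqref{eq: quasi-linear SDE differential}, and uniqueness for $Y$ transfers to uniqueness for $X$.

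For the integrability statement I would proceed as follows. The ordinary random variables $J(t)$ and $J(t)^{\d(-1)}$ are lognormal, so they possess finite moments of every order, bounded uniformly for $t\in[0,T]$; moreover the translation operator maps $L^p(\P)$ into $L^q(\P)$ for every $q<p$ by \cite[Corollary 2.10.5]{Holden1996}. A Gr\"onwall estimate applied to the pathwise ODE, using the linear growth of $a$, bounds the $L^r(\P)$-norms of $Y(t)$ on $[0,T]$ for every finite $r$; combining these with H\"older's inequality in the identity $X(t)=Y(t)\d J(t)=J(t)\cdot\T_{-g_t}Y(t)$ yields $X(t)\in L^p(\P)$ for all $p\geq1$ and all $t\in[0,T]$.

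I expect the principal obstacle to lie not in the algebra of the reduction but in controlling integrability through it: the translation operator only preserves $L^p(\P)$ up to an arbitrarily small loss of exponent ($L^p\to L^q$ with $q<p$), so closing the moment bounds for all $p$ simultaneously, while ensuring that the pathwise solutions assemble into a measurable, $\sstar$-valued process with the stated moments uniformly on $[0,T]$, is the delicate part. The linear-growth and Lipschitz hypotheses of Assumption \ref{ass:a_existence} are precisely what is needed to tame the nonlinearity $a$ once it has been conjugated by the integrating factor and the translation operators.
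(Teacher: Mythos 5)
Your proposal is correct and follows essentially the same route as the paper: a Wick integrating factor (yours is the Wick inverse of the paper's $J(t)=\wexp{-\alpha t-\beta B^H(t)}$, so the substitution $X=Y\d J$ is the paper's $Z=J\d X$ in reverse), Gjessing's Lemma to convert the Wick products into ordinary products with the translation $\T_{\pm\beta\M\ind{t}}$, pathwise Picard--Lindel\"of for the resulting random ODE, and lognormal moment bounds plus Gr\"onwall for the $L^p$ statement. The only cosmetic difference is that after conjugation the Lipschitz constant of $z\mapsto J(t)a(t,J(t)^{-1}z)$ is in fact the deterministic constant $C$ from Assumption \ref{ass:a_existence}, so no random Lipschitz constant needs to be controlled.
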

The proof of this theorem involves some important concepts that we exploit while constructing a numerical method and hence we present a sketch of the proof here. For more details see, for example, \cite{Holden1996}.

\begin{proof}
    Assume $X(t)$ is a solution of the quasilinear SDE \eqref{eq: quasi-linear SDE differential}.  
    Consider the integrating factor
    $$ J(t)\coloneqq \wexp{-\alpha t-\beta B^H(t)}=\wexp{-\alpha t}\dm\wexp{-\beta \fbm{t}}.$$
By \eqref{eq:BH} we have $\fbm{t}=w_{\M\ind{t}}$ and $\beta \fbm{t}=w_{\beta \M\ind{t}}$, so we have both 
\begin{equation}
\label{eq:Jwickexp}
J(t) =\exp\br{-\alpha t}\wexp{w_{-\beta\M\ind{t}}} 
\end{equation}
and by \eqref{eq:expBh} that $J(t)$ and its inverse $J(t)^{-1}$ are given by 
\begin{equation}
\label{eq:JexpJinvexp}
J(t) =\exp\br{-\alpha t-\frac{1}{2}\beta^2 t^{2H}-\beta \fbm{t}}, \qquad J(t)^{-1}=\exp\br{\alpha t+\frac{1}{2}\beta t^{2H}+\beta B^H(t)}. 
\end{equation}    
    Define $Z(t)\coloneqq J(t)\dm X(t)$. By the Wick chain rule we have
    \begin{equation*}
        \frac{d}{dt}J(t)=J(t)\dm \br{-\alpha-\beta W^H(t)}.
    \end{equation*}
    Hence, by the Wick product rule and the fact that $X(t)$ solves \eqref{eq: quasi-linear SDE differential} we have
    \begin{align*}
        \frac{d}{dt}Z(t)&=\br{\frac{d}{dt}J(t)}\dm X(t)+J(t)\dm \br{\frac{d}{dt}X(t)}\\
        &=\br{-\alpha-\beta W^H(t)}\dm J(t) \dm X(t)+J(t)\dm\br{\alpha X(t)+ a(t,X(t))+\beta X(t)\dm W^H(t)}\\
        &= J(t)\dm a(t,X(t)).
    \end{align*}
Since $\ind{t}\in{L_H^2(\R)}$, we have that $-\beta\M\ind{t}\in{L^2(\R)}$. This means that we can use $J(t)$  as in \eqref{eq:Jwickexp} and Gjessing's Lemma to obtain
\begin{align*}
    J(t)\dm a(t,X(t))&=\exp\br{-\alpha t}\wexp{w_{-\beta\M\ind{t}}}\dm a(t,X(t))\\
    &=J(t)\br{\T_{\beta\M\ind{t}}a(t,X(t))}
\end{align*}
and
\begin{align*}
    Z(t)&=\exp\br{-\alpha t}\wexp{w_{-\beta\M\ind{t}}}\dm X(t)\\
    &=J(t)\br{\T_{\beta\M\ind{t}}X(t)}.
\end{align*} 
Combining \eqref{eq:JexpJinvexp} with the fact that the translation operator can also be inverted as in \eqref{eq: translation operator inverse}, we can write
\begin{equation}
\label{eq: X in Z}
    X(t)=\T_{-\beta\M\ind{t}}\left[J(t)^{-1}Z(t)\right].
\end{equation}
Putting this all together we find 
\begin{align*}
    \frac{d}{dt}Z(t)&=J(t)\dm a(t,X(t))\\
    &=J(t)\br{\T_{\beta\M\ind{t}}a(t,X(t))}\\
    &=J(t)\br{\T_{\beta\M\ind{t}}a(t,\T_{-\beta\M\ind{t}}\left[J(t)^{-1}Z(t)\right])}\\
    &=J(t)a(t,J(t)^{-1}Z(t)).
\end{align*}
Therefore, we have the following differential equation for $Z(t)$:
\begin{equation}
\label{eq: Z differential}
    \frac{d}{dt}Z(t)=J(t)a(t,J(t)^{-1}Z(t)), \qquad Z(0)=x_0.
\end{equation}
This equation has a unique solution $Z(t,\omega)$ on $[0,T]$ for any $\omega\in\Omega$ due to the assumptions on $a(t,x)$ \cite[see][(3.6.15)]{Holden1996}. Together with \eqref{eq: X in Z}, this implies that our original SDE \eqref{eq: quasi-linear SDE differential} has a unique solution $X(t,\omega)$ for any $\omega\in\Omega$. The second statement of the theorem follows from the proof of Lemma~\ref{lem:J_BoundedMoments} in the next section and can be found in Section 3.3 of \cite{Mishura2008} as well.
\end{proof}

\subsection{A numerical method for quasilinear SDEs}
\label{subsec: numerical method}
The proof of Theorem~\ref{thm: quasilinear SDE} contains some elements that help us formulate a numerical method for quasilinear SDEs driven by fBm. Our method is an adaptation of that from Section 3.4.2 of \cite{Mishura2008}.

Suppose we want to sample an approximation to the solution $X(t)$ of a quasilinear SDE. We can sample a path of the underlying fBm $\fbm{t,\omega}$ using, for example, a circulant embedding \cite[see, for example,][]{Lord2014}. Afterwards, we can numerically approximate the process $Z(t,\omega)$ given by the differential equation \eqref{eq: Z differential}. This equation does not contain any Wick products 
and can therefore be approximated using standard methods.  To obtain a pathwise approximation of $X(t,\omega)$ given by \eqref{eq: X in Z}, however, we need to deal with the translation operator $\T_{-\beta\M\ind{t}}$. Fortunately, this particular translation operates nicely on $\fbm{t}$. 

Let $t,s\in[0,T]$. By the fact that $\fbm{s,\omega}=w_{\M\ind{s}}(\omega)=\omega(\M\ind{s})$ for all $\omega\in\calS'(\R)$, together with the embedding ${L^2(\R)}\hookrightarrow\calS'(\R)$ given by \eqref{eq: Schwartz embedding}, we find that
\begin{align*}
\label{eq: translated fBm}
    \T_{-\beta\M\ind{t}}\fbm{s,\omega}&=\fbm{s,\omega-\beta\M\ind{t}}\\
    &=(\omega-\beta\M\ind{t})(\M\ind{s})\\
    &=\omega(\M\ind{s})-\beta \int_\R(\M\ind{t})(x)(\M\ind{s})(x)dx\\
    &=w_{\M\ind{s}}(\omega)-\beta\langle\M\ind{t},\M\ind{s}\rangle_{L^2(\R)}.
\end{align*}
Proposition~\ref{prop: IP is iso} then gives us
\begin{equation*}
    \T_{-\beta\M\ind{t}}\fbm{s}=\fbm{s}-\frac{1}{2}\beta(t^{2H}+s^{2H}-|t-s|^{2H}).
\end{equation*}
This result can be used to evaluate the factors of the form $\T_{-\beta\M\ind{t}}J(s)$, such as those that appear in \eqref{eq: X in Z} of the proof of Theorem~\ref{thm: quasilinear SDE}:
\begin{align*}
    \T_{-\beta\M\ind{t}}J(s)=&\exp\br{-\alpha s-\frac{1}{2}\beta^2 s^{2H}-\beta \T_{-\beta\M\ind{t}}\fbm{s}}\\
    =&\exp\br{-\alpha s-\frac{1}{2}\beta^2 s^{2H}-\beta \fbm{s}+\frac{1}{2}\beta^2(t^{2H}+s^{2H}-|t-s|^{2H})}\\
    =&\exp\br{-\alpha s+\frac{1}{2}\beta^2(t^{2H}-|t-s|^{2H})-\beta\fbm{s}}.
\end{align*}
For brevity we consider for some fixed $t\in[0,T]$ the processes $\tJt\colon [0,t]\to(\calS)^*$ and $\tZt\colon[0,t]\to(\calS)^*$ defined by
\begin{equation}
\label{eq: J in omega t}
    \tJt(s)\coloneqq \T_{-\beta\M\ind{t}}J(s)=\exp\br{-\alpha s+\frac{1}{2}\beta^2(t^{2H}-|t-s|^{2H})-\beta\fbm{s}},
\end{equation}
as well as
\begin{equation}
\label{eq: tilde Z def}
    \tZt(s)\coloneqq \T_{-\beta\M\ind{t}}Z(s).
\end{equation}
In particular, Equation \eqref{eq: X in Z} then becomes 
\begin{equation}
    \label{eq: X(t) expression}
    X(t)=\tJt(t)^{-1}\tZt(t),
\end{equation}
where 
\begin{equation*}
    \tJt(t)^{-1}=\exp\br{\alpha t-\frac{1}{2}\beta^2t^{2H}+\beta\fbm{t}}.
\end{equation*}
To evaluate $X(t,\omega)$, it now suffices to know the value of $\tZt(t,\omega)=Z(t,\omega-\beta\M\ind{t})$. Since the latter satisfies the differential equation \eqref{eq: Z differential}, we have that $\tZt(s)$ is given by 
\begin{equation}
\label{eq: tilde Z differential}
    \frac{d}{ds}\tZt(s)=\tJt(s)a(s,\tJt(s)^{-1}\tZt(s)), \qquad \tZt(0)=x_0
\end{equation}
on $[0,t]$.
Therefore, to approximate the value $X(t,\omega)$, we first need to approximate the path of $\tZt(s,\omega)$ on $[0,t]$ governed by \eqref{eq: tilde Z differential} using the underlying fBm path $\fbm{s,\omega}$. 

Let us for now focus on approximating $X(T,\omega)$ at the end time $T$. We take a uniformly spaced grid $0=t_0<t_1<\dots<t_n=T$, where $t_{i}=i\Delta t$ for some fixed $\Dt>0$. Since $\tZT(s,\omega)$ satisfies \eqref{eq: tilde Z differential}, we have $\tZT(t_0)=x_0$ and 
\begin{equation}
\label{eq: integral Z}
    \tZT(t_{i+1},\omega)=\tZT(t_i,\omega)+\int_{t_i}^{t_{i+1}}\tJT(s,\omega)a(s,\tJT(s,\omega)^{-1}\tZT(s,\omega))ds.
\end{equation}
There are several ways to approximate the integral. Application of the explicit Euler method gives
\begin{equation}
\label{eq:explicit Euler approximation}
    \tZT(t_{i+1},\omega)\approx \tZT(t_i,\omega)+\Delta t\tJT(t_i,\omega)a(t_i,\tJT(t_i,\omega)^{-1}\tZT(t_i,\omega)).
\end{equation}
This results in the iterative numerical scheme 
\begin{equation*}
    \tZ^n_0=x_0,
\end{equation*}
\begin{equation}
\label{eq: Z numerical scheme}
    \tZ^n_{i+1}=\tZ_i^n+\Delta t \tJ_i^{n}a(t_i,\tZ_i^n/\tJ_i^n), \qquad 0\leq i\leq n-1,
\end{equation}
where 
\begin{align*}
    \tJ_i^n\coloneqq \tJT(t_i,\omega)&=\exp\br{-\alpha t_i+\frac{1}{2}\beta^2\br{T^{2H}-(T-t_i)^{2H}}-\beta\fbm{t_i,\omega}}
\end{align*}

This numerical scheme gives us an approximation $\tZT(T,\omega)\approx \tZ_n^n$. 
Finally we obtain the approximation of the solution of the quasilinear SDE at time $T=t_n$:
\begin{equation}
\label{eq: Xn num method}
    X(T,\omega)\approx X_n \coloneqq  \tZ_n^n/\tJ_n^n
\end{equation}
This entire numerical scheme that we denote \GBMEM is summarized in Algorithm~\ref{alg: single time}.

\begin{algorithm}
\caption{Generate approximation of the solution $X(T)$ to the SDE \eqref{eq: quasi-linear SDE} at time $T$.\\
\textbf{Input:} Hurst parameter $H$, initial value $x_0$, final time $T$, number of steps $n$, fBm path $B^H\in \R^n$.\\
\textbf{Output:} A value $X_n\in\R$ such that $X_n\approx X(T)$.}
\label{alg: single time}
\begin{algorithmic}[1]
\State $\Delta t=T/n$
\State $\tZ^n_0=x_0$
\For{$i=0,\dots,n-1$}
    \State $\tJ^n_i=\exp(-\alpha i\Dt+\frac{1}{2}\beta^2\Delta t^{2H}(n^{2H}-(n-i)^{2H})-\beta B_i^H)$
    \State $\tZ^n_{i+1}=\tZ_i^n+\Delta t \tJ^n_ia(i\Delta t,\tZ^n_i/\tJ^n_i)$
\EndFor
\State $\tJ_n^n=\exp(-\alpha n\Dt+\frac{1}{2}\beta^2 (n\Delta t)^{2H}-\beta B_n^H)$
\State $X_n=\tZ_n^n/\tJ_n^n$
\end{algorithmic}
\end{algorithm}

Suppose now that we want to approximate the path of $X(t,\omega)$ on $[0,T]$. Unfortunately, for every single $t\in[0,T]$ for which we want to obtain a sample of $X(t,\omega)$, we have to approximate the value of the individual process $\tZt(s,\omega)$ at time $t$. We thus need to run the numerical scheme once each time for each approximation of $X(t,\omega)$.

We again take a uniformly spaced grid $0=t_0<t_1<\dots<t_n=T$, where $t_{i}=i\Delta t$ for some $\Dt>0$. We approximate the value $X(t_k,\omega)$ by running Algorithm $\ref{alg: single time}$ from $0$ to $t_k$ on the grid $0=t_0<t_1<\dots<t_k$. This results in Algorithm~\ref{alg: path sample} for \GBMEM.

\begin{algorithm}
\caption{Generate approximation of the path of solution X(t) to the SDE \eqref{eq: quasi-linear SDE} on $[0,T]$.\\
\textbf{Input:} Hurst parameter $H$, initial value $x_0$, final time $T$, number of steps $n$, fBm path $B^H\in\R^n$.\\
\textbf{Output:} A vector $X\in \R^n$ such that $X_k\approx X(t_k)$, $k=1,\ldots, n$.}
\label{alg: path sample}
\begin{algorithmic}[1]
\State $\Delta t=T/n$
\For{$k=1,\dots,n$}
    \State $\tZ^k_0=x_0$

    \For{$i=0,\dots,k-1$}
        \State $\tJ^k_i=\exp(-\alpha i\Dt+\frac{1}{2}\beta^2\Delta t^{2H}(k^{2H}-(k-i)^{2H})-\beta B^H_i)$
        \State $\tZ^k_{i+1}=\tZ^k_i+\Delta t \tJ^k_ia(i\Delta t,\tZ^k_i/\tJ^k_i)$
    \EndFor

    \State $\tJ^k_k=\exp(-\alpha k\Dt+\frac{1}{2}\beta^2 (k\Delta t)^{2H}-\beta B^H_k)$
    \State $X_k=\tZ^k_k/\tJ^k_k$
    
\EndFor
\end{algorithmic}
\end{algorithm}
\begin{remark}
    Simulating a full sample path using Algorithm~\ref{alg: path sample} is computationally expensive due to the fact that for each distinct time instance of the path we need to approximate a solution of an ODE up to this time using the explicit Euler scheme as in Algorithm~\ref{alg: single time}. This is caused by the fact that the solution is not Markovian and therefore the entire past needs to be taken into account to calculate a subsequent value of the approximation.
\end{remark}
\begin{remark}
    \label{rem:Mishura1}
    In the work of \cite{Mishura2008} a similar numerical method is proposed, that we will call \MISHURA. The difference with \GBMEM is that the linear drift term involving $\alpha$ is not accounted for in the integrating factor $J(t)$ as in \eqref{eq:JexpJinvexp}, but is discretized together with the non-linear drift term $a(t,x)$ using an explicit Euler approximation as in \eqref{eq:explicit Euler approximation}. In the case of gfBm \eqref{eq: fGBM SDE}, where $a(t,x)=0$, \MISHURA therefore gives an approximation of the solution using an explicit Euler scheme, whereas \GBMEM solves it exactly. If $\alpha=0$, the two methods coincide. The convergence proof of \MISHURA in \cite{Mishura2008} relies on results only valid for $H>\frac{1}{2}$ in particular to deal with the more general setting of a non-autonomous forcing term $\beta(t)$.
\end{remark}

To prove our convergence result we make the following additional assumption on the regularity of $a(t,X)$.
\begin{assumption}
    \label{ass:a_numerics}
We assume there exists constants $C>0$ and $\zeta>0$ such that for all $t,s\in[0,T]$ and $x\in\R$
$$|a(t,x)-a(s,x)|\leq C (1+\left| x\right|)|t-s|^\zeta.$$ 
\end{assumption}
We now state our main result.
\begin{theorem}
\label{thm: RMSE}
    Let the function $a\colon[0,T]\times\R\to\R$ satisfy Assumptions~\ref{ass:a_existence}
    and \ref{ass:a_numerics} and let $X(t)$ be the solution to \textup{\eqref{eq: quasi-linear SDE differential}}. Let $X_n$ be given by \textup{\eqref{eq: Xn num method}}. Then there exists a constant $C_H>0$ such that for all $H\in(0,1)$
    \begin{equation*}
        \br{\E\left[\br{X(t_n)-X_n}^2\right]}^\frac{1}{2}\leq C_H \Dt^{\min(H,\zeta)}.
    \end{equation*}
\end{theorem}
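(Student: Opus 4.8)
The plan is to reduce everything to a mean-square Euler error estimate for the pathwise ODE \eqref{eq: tilde Z differential} satisfied by $\tZT$. Since the scheme uses $\tJ_n^n=\tJT(t_n)$ exactly, the global error factorises as $X(t_n)-X_n=\tJT(t_n)^{-1}\br{\tZT(t_n)-\tZ_n^n}$. Writing $e_i\coloneqq\tZT(t_i)-\tZ_i^n$, Cauchy--Schwarz gives $\br{\E[(X(t_n)-X_n)^2]}^{1/2}\le\br{\E[\tJT(t_n)^{-4}]}^{1/4}\br{\E[e_n^4]}^{1/4}$. The factor $\tJT(t_n)^{-1}=\exp\br{\alpha t_n-\frac{1}{2}\beta^2 t_n^{2H}+\beta\fbm{t_n}}$ is lognormal and hence has finite moments of every order (the type of estimate recorded in Lemma \ref{lem:J_BoundedMoments}), so it suffices to prove $\br{\E[e_n^q]}^{1/q}\le C_H\Dt^{\min(H,\zeta)}$ in a general $L^q(\P)$ norm.

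Second, I would set up the Euler analysis for $\frac{d}{ds}\tZT(s)=g(s,\tZT(s))$ with $g(s,y)\coloneqq\tJT(s)a\br{s,\tJT(s)^{-1}y}$. The crucial structural point is that the integrating factor cancels in the Lipschitz estimate in $y$: by the Lipschitz continuity in Assumption \ref{ass:a_existence}, $|g(s,y_1)-g(s,y_2)|\le\tJT(s)\,C\,\tJT(s)^{-1}|y_1-y_2|=C|y_1-y_2|$, so $g$ is Lipschitz in $y$ with deterministic constant $C$. From the one-step identity $e_{i+1}=e_i+\int_{t_i}^{t_{i+1}}\br{g(s,\tZT(s))-g(t_i,\tZ_i^n)}\,ds$ I split the integrand into $g(s,\tZT(s))-g(s,\tZT(t_i))$, the consistency term $g(s,\tZT(t_i))-g(t_i,\tZT(t_i))$, and $g(t_i,\tZT(t_i))-g(t_i,\tZ_i^n)$, the last bounded by $C|e_i|$. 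The first term is controlled by $\|\tZT(s)-\tZT(t_i)\|_{L^q}\le C\Dt$, using the linear-growth bound $|g(u,\tZT(u))|\le C(|\tJT(u)|+|\tZT(u)|)$ and uniform-in-$s$ moment bounds on $\tJT$ and $\tZT$; it therefore contributes at $O(\Dt^2)$ per step.

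Third, the consistency term is the heart of the matter. Writing $y=\tZT(t_i)$ and inserting $\pm\tJT(s)a(t_i,\tJT(s)^{-1}y)$, it decomposes into a piece $\tJT(s)\br{a(s,\cdot)-a(t_i,\cdot)}$, handled by Assumption \ref{ass:a_numerics} and bounded in $L^q$ by $C\Dt^\zeta$, and pieces proportional to the increment $\tJT(s)-\tJT(t_i)$, handled by the key estimate $\|\tJT(s)-\tJT(t_i)\|_{L^p}\le C|s-t_i|^H\le C\Dt^H$ for every $p$. This is where the fractional regularity enters: writing $\tJT(s)=\exp(\psi(s)-\beta\fbm{s})$ with $\psi$ deterministic and Hölder-$H$ (the only delicate point being $(t_n-s)^{2H}$, which is Hölder-$\min(2H,1)\ge H$ near $s=t_n$), the inequality $|e^x-1|\le|x|e^{|x|}$ together with $\|\fbm{s}-\fbm{t_i}\|_{L^r}\le C_r|s-t_i|^H$ (the exact mean-square Hölder rate) and the lognormal moment bounds yields the claim after one application of Hölder's inequality. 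All products arising in the consistency term are then estimated in $L^q$ by the generalised Hölder inequality, each factor being either a bounded moment of $\tJT^{\pm1}$ or $\tZT$ or the $O(\Dt^{\min(H,\zeta)})$ increment.

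Finally, assembling the three terms gives the recursion $\|e_{i+1}\|_{L^q}\le(1+C\Dt)\|e_i\|_{L^q}+C\Dt^{1+\min(H,\zeta)}$ with $e_0=0$; a discrete Grönwall inequality and $n\Dt=t_n\le T$ then give $\|e_n\|_{L^q}\le C_H\Dt^{\min(H,\zeta)}$, and the reduction in the first paragraph closes the proof. The prerequisites I would establish beforehand are uniform-in-$i,n$ moment bounds for the numerical iterates (by a discrete Grönwall on $|\tZ_{i+1}^n|\le(1+C\Dt)|\tZ_i^n|+C\Dt|\tJ_i^n|$ combined with Minkowski's inequality) and the analogous continuous bounds for $\tZT$. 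The main obstacle is the increment estimate $\|\tJT(s)-\tJT(t_i)\|_{L^p}\le C\Dt^H$: pathwise the fBm is only Hölder-$(H-\varepsilon)$, so the exact exponent $H$ can be recovered only in a moment sense, and the argument must trade this pathwise deficiency for the exact $L^r$ Hölder rate of the Gaussian increments while keeping the random, exponentially distributed coefficients under control through Hölder's inequality.
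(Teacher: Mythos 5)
Your proposal is correct and follows essentially the same route as the paper: the error is factorised through the integrating factor as $X(t_n)-X_n=\tJT(t_n)^{-1}\bigl(\tZT(t_n)-\tZ_n^n\bigr)$, the local error for the $\tZT$-equation is split into exactly the paper's three pieces (Lipschitz feedback, consistency from freezing $\tJT$ and the time argument of $a$, and the $O(\Dt)$ increment of $\tZT$), and the rate comes from the same $L^p$ H\"older-$H$ estimate for increments of $\tJT^{\pm1}$ (the paper's Lemma \ref{lemma: DJorder}, proved there via $|e^u-e^v|\le(e^u+e^v)|u-v|$ rather than your $|e^x-1|\le|x|e^{|x|}$) followed by a discrete Gr\"onwall argument. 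The only differences are organisational: you run a per-step recursion in $\|e_i\|_{L^q}$ where the paper bounds the globally summed terms $I_1,I_2,I_3$ in mean square, and your use of fourth moments in the final Cauchy--Schwarz step is in fact slightly more careful than the paper's.
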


\begin{remark}
    We note that if $a(x)\geq 0$, $x_0>0$ then the numerical method constructed in \eqref{eq: Xn num method} would preserve positivity of the solution, i.e. $X_n>0$.
\end{remark}

\begin{remark}
    The results of this section can be extended to SDEs with fractional additive noise of the from
    \begin{equation*}
        \frac{d}{dt}X(t)=\alpha X(t)+a(t,X(t))+(\beta X(t)+\sigma)\dm W^H(t), \quad X(0)=x_0, \quad t\in[0,T] \quad \textup{ in } (\calS)^*,
    \end{equation*}
    where $\sigma\in\R$ and $\beta\neq0$, by considering the process $Y(t)=X(t)+\frac{\sigma}{\beta}$ similarly to the standard Brownian motion case $H=\frac{1}{2}$. The process $Y$ then solves the differential equation
    \begin{equation*}
        \frac{d}{dt}Y(t)=\alpha Y(t)-\frac{\alpha\sigma}{\beta}+a(t,Y(t)-\frac{\sigma}{\beta})+\beta Y(t)\dm W^H(t), \quad Y(0)=x_0+\frac{\sigma}{\beta}, \quad t\in[0,T] \quad \textup{ in } (\calS)^*,
    \end{equation*}
    which is again of the of the form \eqref{eq: quasi-linear SDE differential}.
\end{remark}

\section{Convergence Proof}
\label{sec: convergence_proof}

 We start by reviewing some preliminary results. For convenience we let $C>0$ and $C_H>0$ denote constants that may change from line to line, where the latter may depend on $H$, but both are independent of $\Dt$. From now on $a$ satisfies both Assumption~\ref{ass:a_existence} and \ref{ass:a_numerics}.

\subsection{Preliminary Lemmas}
First we give four preliminary lemmas.
\begin{lemma}
\label{lemma: 2H power difference}
    Let $t,s\in[0,T]$. Then
    \begin{equation}
        |t^{2H}-s^{2H}|\leq 
        \begin{cases}
         |t-s|^{2H},& \text{if } H\leq \frac{1}{2},\\
         2HT^{2H-1}|t-s|,& \text{if } H> \frac{1}{2}.
        \end{cases}
    \end{equation}
\end{lemma}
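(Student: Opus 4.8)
The plan is to exploit the symmetry of the left-hand side to assume without loss of generality that $t\geq s$, and then to work from the exact representation
\begin{equation*}
t^{2H}-s^{2H}=2H\int_s^t u^{2H-1}\,du,
\end{equation*}
which is the fundamental theorem of calculus applied to $u\mapsto u^{2H}$ on $[s,t]$. Both cases then reduce to bounding this single integral, and it is the sign of $2H-1$ that selects which elementary monotonicity argument to use.

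For $H>\frac{1}{2}$ the exponent $2H-1$ is positive, so the integrand $u^{2H-1}$ is increasing on $[s,t]\subseteq[0,T]$ and is therefore bounded above by $T^{2H-1}$. Pulling this constant out of the integral gives $t^{2H}-s^{2H}\leq 2HT^{2H-1}(t-s)$, which is exactly the claimed estimate. Equivalently, one may simply invoke the mean value theorem for $u\mapsto u^{2H}$ and bound the resulting intermediate point by $T$.

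For $H\leq\frac{1}{2}$ the exponent $2H-1$ is nonpositive, so $u^{2H-1}$ is nonincreasing on $(0,\infty)$ (and integrable near $0$ since $2H>0$). The interval $[s,t]$ has the same length $t-s$ as $[0,t-s]$ but lies further from the origin, where the integrand is smaller; making the substitution $u=v+s$ and using $(v+s)^{2H-1}\leq v^{2H-1}$ shows
\begin{equation*}
\int_s^t u^{2H-1}\,du\leq\int_0^{t-s}u^{2H-1}\,du=\frac{(t-s)^{2H}}{2H}.
\end{equation*}
Multiplying by $2H$ yields $t^{2H}-s^{2H}\leq(t-s)^{2H}$, completing this case.

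I do not expect any genuine obstacle here: the statement is elementary and both regimes flow from the single observation that $u^{2H-1}$ is monotone with a sign determined by $H-\frac{1}{2}$. The only point needing a little care is the comparison of integrals over the shifted intervals in the subcritical case, which is justified purely by the monotone decay of the integrand; alternatively, one can appeal directly to the standard subadditivity of the concave map $x\mapsto x^{2H}$ when $2H\leq 1$, applied with the split $t=(t-s)+s$.
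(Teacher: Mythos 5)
Your proof is correct. For the case $H>\frac{1}{2}$ you argue exactly as the paper does (the mean value theorem with the intermediate point bounded by $T$; your integral formulation is the same estimate in different clothing). For the case $H\leq\frac{1}{2}$ your route is genuinely different in mechanics, though it rests on the same underlying fact that $u\mapsto u^{2H-1}$ is nonincreasing: you write $t^{2H}-s^{2H}=2H\int_s^t u^{2H-1}\,du$ and compare with the translated integral over $[0,t-s]$, whereas the paper introduces the auxiliary function $f(x)=x^{2H}-1-(x-1)^{2H}$ on $[1,\infty)$, shows $f'\leq 0$, and substitutes $x=t/s$ after a homogeneity rescaling. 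Your version has the minor advantages of treating both regimes from a single identity and of handling $s=0$ without a separate remark (the paper's substitution $x=t/s$ implicitly requires $s>0$, with the boundary case being trivial equality); the paper's version avoids any discussion of the improper integral near the origin. Both are complete; the alternative you mention at the end, subadditivity of the concave map $x\mapsto x^{2H}$, is the standard one-line shortcut for the subcritical case.
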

\begin{proof}
    Assume without loss of generality that $t\geq s>0$. The statement for $H>\frac{1}{2}$ is a direct consequence of the mean value theorem: there exists a constant $c\in(s,t)$ such that
    \begin{equation*}
        t^{2H}-s^{2H}=2Hc^{2H-1}(t-s).
    \end{equation*}
    Using the fact that $c\leq t\leq T$ then gives the desired result. 
    
    For $H\leq\frac{1}{2}$ we consider the function $f\colon [1,\infty)\to \R$ given by $f(x)\coloneqq x^{2H}-1-(x-1)^{2H}$. This function is differentiable on $(1,\infty)$ and has the derivative
    \begin{equation*}
        f^\prime(x)=2H(x^{2H-1}-(x-1)^{2H-1}).
    \end{equation*}
    Since $x\mapsto x^{2H-1}$ is decreasing on $(0,\infty)$ for $H\leq\frac{1}{2}$, we find that $f^\prime(x)\leq 0$ for all $x>1$. Combined fact that $f(1)=0$, we can conclude that $f(x)\leq 0$ for all $x\geq 1$. Inserting $x=\frac{t}{s}$ into $f(x)$ then gives
    \begin{equation*}
        \br{\frac{t}{s}}^{2H}-1\leq \br{\frac{t}{s}-1}^{2H},
    \end{equation*}
    from which the result follows by multiplication of both sides with $s^{2H}$.
\end{proof}

We now consider the processes $\tZT(t)$ and $\tJT(t)$ given by 
\begin{equation}\label{def:ZJtilde}
  \tZT(t)= \T_{-\beta\M\ind{T}}Z(t),  
\qquad 
\tJT(t)= \T_{-\beta\M\ind{T}}J(t),
\end{equation}
as defined in \eqref{eq: J in omega t} and \eqref{eq: tilde Z def}.

In particular, by \eqref{eq: J in omega t} we have
\begin{equation}
\label{eq: J tilde}
\tJT(t)=\exp\br{-\alpha t+\frac{1}{2}\beta^2(T^{2H}-(T-t)^{2H})-\beta\fbm{t}}.
\end{equation}

It can be shown that $\tJT(t)$ and $\tZT(t)$ have uniformly bounded moments on $[0,T]$. For $\tJT(t)$ this follows directly from the fact that it is an exponential of a normal random variable with bounded variance on $[0,T]$. 

 \begin{lemma}
 \label{lem:J_BoundedMoments}
 For every $q\geq 1$ there exists a constant $C_H>0$ such that for all $t\in[0,T]$ we have 
 $$\E[\tJT(t)^q]\leq C_H, \quad \E[(\tJT(t)^{-1})^q]\leq C_H, \quad \E[|\tZT(t)|^q]\leq C_H.$$
 \end{lemma}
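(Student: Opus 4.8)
The plan is to prove the three moment bounds separately, handling $\tJT(t)$ and its inverse first (which are the easy parts) and then deriving the bound on $\tZT(t)$ from these via the differential equation it satisfies.

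\textbf{Bounds on $\tJT(t)$ and $\tJT(t)^{-1}$.} From the explicit expression \eqref{eq: J tilde}, I observe that $\tJT(t)=\exp(g(t)-\beta\fbm{t})$ where $g(t)=-\alpha t+\frac{1}{2}\beta^2(T^{2H}-(T-t)^{2H})$ is a deterministic function that is bounded uniformly on $[0,T]$. Since $\fbm{t}\sim\mathcal{N}(0,t^{2H})$, for any $q\geq 1$ I would write
\begin{equation*}
\E[\tJT(t)^q]=\exp(qg(t))\,\E[\exp(-q\beta\fbm{t})]=\exp\br{qg(t)+\frac{1}{2}q^2\beta^2 t^{2H}},
\end{equation*}
using the formula $\E[\exp(\lambda N)]=\exp(\frac{1}{2}\lambda^2\sigma^2)$ for a centered Gaussian $N$ with variance $\sigma^2$. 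Because $g(t)$ and $t^{2H}$ are bounded on $[0,T]$, the right-hand side is bounded by a constant $C_H$ uniformly in $t$. The bound on $\E[(\tJT(t)^{-1})^q]$ follows identically, replacing $-q\beta$ by $+q\beta$ in the exponent; the sign does not affect the Gaussian moment generating function, so again the bound is uniform.

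\textbf{Bound on $\tZT(t)$.} This is the main step. I would start from the integral form of the defining ODE \eqref{eq: tilde Z differential}, namely
\begin{equation*}
\tZT(t)=x_0+\int_0^t \tJT(s)\,a\br{s,\tJT(s)^{-1}\tZT(s)}\,ds.
\end{equation*}
Applying the linear growth bound from Assumption \ref{ass:a_existence}\ref{enum: cond1}, I get $|a(s,\tJT(s)^{-1}\tZT(s))|\leq C(1+\tJT(s)^{-1}|\tZT(s)|)$, so that the integrand is bounded pointwise by $C\tJT(s)+C|\tZT(s)|$. Taking absolute values, raising to the power $q$, and applying Hölder's (or Jensen's) inequality to the time integral yields
\begin{equation*}
|\tZT(t)|^q\leq C_H\br{|x_0|^q+\int_0^t \tJT(s)^q\,ds+\int_0^t |\tZT(s)|^q\,ds}.
\end{equation*}
Taking expectations and using the already-established uniform bound $\E[\tJT(s)^q]\leq C_H$ gives
\begin{equation*}
\E[|\tZT(t)|^q]\leq C_H\br{1+\int_0^t \E[|\tZT(s)|^q]\,ds}.
\end{equation*}
An application of Grönwall's inequality then produces $\E[|\tZT(t)|^q]\leq C_H e^{C_H t}\leq C_H$ uniformly on $[0,T]$, completing the proof.

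\textbf{Main obstacle.} The technical subtlety I expect is justifying that the pathwise ODE for $\tZT$ has moments at all and interchanging expectation with the Grönwall argument; this requires knowing a priori that $\E[|\tZT(s)|^q]$ is finite and measurable in $s$ before closing the estimate. I would address this either by a standard localization/stopping argument or by noting that $\tZT(t,\omega)=Z(t,\omega-\beta\M\ind{T})$ is obtained via the translation operator applied to the solution $Z$ of \eqref{eq: Z differential}, whose integrability is guaranteed by Theorem \ref{thm: quasilinear SDE} together with the mapping property of $\T_{-\beta\M\ind{T}}$ on $L^p(\P)$ cited from \cite[Corollary 2.10.5]{Holden1996}. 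The rest of the argument is routine, relying only on Gaussian moment bounds and Grönwall.
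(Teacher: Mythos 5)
Your bounds on $\E[\tJT(t)^q]$ and $\E[(\tJT(t)^{-1})^q]$ are exactly the paper's argument (Gaussian moment generating function plus boundedness of the deterministic exponent on $[0,T]$), and your overall strategy for $\tZT$ — integral equation, linear growth, Jensen, Gronwall — matches the paper's as well. The one substantive difference is the order of operations in the $\tZT$ step: you take expectations first and then apply Gronwall to $t\mapsto\E[|\tZT(t)|^q]$, which is precisely what forces the a priori finiteness/measurability issue you flag as the "main obstacle." The paper avoids this entirely by applying Gronwall \emph{pathwise}: from $|\tZT(t)|\leq |x_0|+C\int_0^T\tJT(s)\,ds+C\int_0^t|\tZT(s)|\,ds$ (valid for each fixed $\omega$, with the first integral a finite continuous function of $t$) one gets the explicit bound $|\tZT(t)|\leq\br{|x_0|+C\int_0^T\tJT(s)\,ds}e^{Ct}$, and only then raises to the power $q$, applies Jensen, and takes expectations using the already-established bound on $\E[\tJT(s)^q]$. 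No a priori integrability of $\tZT$ is needed. I would also caution against your proposed fallback of invoking Theorem \ref{thm: quasilinear SDE} to supply the missing integrability: in the paper's logical structure the $L^p$ statement of that theorem is itself derived from this lemma, so that route is circular; the localization argument would work but is unnecessary once you reorder as above. With that reordering your proof coincides with the paper's.
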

\begin{proof}

Fix some $q\geq 1$. In \eqref{eq: J tilde} we see that $\tJT(t)$ is the exponent of a Gaussian random variable. Since the moment generating function of a Gaussian random variable $X\sim\mathcal{N}(\mu,\sigma^2)$ is given by $\E[e^{qX}]=e^{\mu q+\frac{1}{2}\sigma^2q^2}$, it follows that 
\begin{align*}
    \E\left[\tJT(t)^q\right]&=\exp\br{-\alpha qt+\frac{1}{2}q\beta^2(T^{2H}-|T-t|^{2H})+\frac{1}{2}q^2\beta^2t^{2H}}\\
  &  \leq \exp\br{|\alpha|qT+q^2\beta^2T^{2H}}.
\end{align*}
The statement for $\tJT(t)^{-1}$ follows similarly. For the uniform bound on $\E[|\tZT(t)|^q]$ we closely follow the second part of the proof of Theorem~3.6.1 in \cite{Holden1996}. By \eqref{eq: tilde Z differential} and the linear growth of $a(t,x)$ in $x$ we have for all $t\in[0,T]$ that
\begin{align*}
    |\tZT(t)|&\leq |x_0|+\int_0^t\left|\tJT(s)a(s,\tJT(s)^{-1}\tZT(s))\right|ds \\ 
    & \leq |x_0|+C\int_0^t\tJT(s)\br{1+\tJT(s)^{-1}\left|\tZT(s)\right|}ds\\
    & = |x_0|+C\int_0^T\tJT(s)ds+C\int_0^t\left|\tZT(s)\right|ds.
\end{align*}
Positivity of $\tJT(s)$ and Gronwall's inequality then gives the bound 
\begin{equation*}
    |\tZT(t)|\leq\br{|x_0|+C\int_0^T\tJT(s)ds}\exp\br{Ct},
\end{equation*}
so that
\begin{equation*}
    |\tZT(t)|^q\leq2^{q-1}\exp\br{Cqt}\br{|x_0|^q+C\br{\int_0^T\tJT(s)ds}^q}.
\end{equation*}
Applying Jensen's inequality for integrals we get
\begin{align*}
    \E[|\tZT(t)|^q]&\leq\E\left[2^{q-1}\exp\br{Cqt}\br{|x_0|^q+CT^{q-1}\int_0^T\tJT(s)^qds}\right]\\
    &\leq2^{q-1}\exp\br{CqT}\br{|x_0|^q+CT^{q-1}\int_0^T\E[\tJT(s)^q]ds}\\
    &\leq2^{q-1}\exp\br{CqT}\br{|x_0|^q+CT^{q}\exp\br{|\alpha|qT+q^2\beta^2T^{2H}}}.\qedhere
\end{align*}
\end{proof}

\begin{lemma}    
\label{lem:Zcontrol}
    Let $q\geq1$. Then $\E[|\tZT(t)-\tZT(s)|^q]\leq C_H|t-s|^q$ for any $t,s\in[0,T]$.
\end{lemma}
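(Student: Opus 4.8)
The plan is to establish the $L^q$-H\"older continuity of $\tZT$ directly from the integral form of its governing equation \eqref{eq: tilde Z differential}. First I would write, for $t\geq s$,
\begin{equation*}
\tZT(t)-\tZT(s)=\int_s^t\tJT(r)a\br{r,\tJT(r)^{-1}\tZT(r)}dr,
\end{equation*}
which follows from integrating \eqref{eq: tilde Z differential} over $[s,t]$. Taking absolute values, applying the triangle (integral) inequality and then the linear growth bound from Assumption \ref{ass:a_existence}\ref{enum: cond1} gives
\begin{equation*}
\l|\tZT(t)-\tZT(s)\r|\leq C\int_s^t\tJT(r)\br{1+\tJT(r)^{-1}\l|\tZT(r)\r|}dr=C\int_s^t\br{\tJT(r)+\l|\tZT(r)\r|}dr.
\end{equation*}

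Next I would raise both sides to the power $q$ and take expectations. Because the integrand is over an interval of length $|t-s|$, the natural tool is Jensen's inequality for integrals (equivalently H\"older's inequality), which yields a factor $|t-s|^{q-1}$ out front and an integral of the $q$th power of the integrand:
\begin{equation*}
\E\l[\l|\tZT(t)-\tZT(s)\r|^q\r]\leq C|t-s|^{q-1}\int_s^t\E\l[\br{\tJT(r)+\l|\tZT(r)\r|}^q\r]dr.
\end{equation*}
I would then bound $\br{\tJT(r)+|\tZT(r)|}^q\leq 2^{q-1}\br{\tJT(r)^q+|\tZT(r)|^q}$ and invoke Lemma \ref{lem:J_BoundedMoments}, which supplies uniform-in-$r$ bounds $\E[\tJT(r)^q]\leq C_H$ and $\E[|\tZT(r)|^q]\leq C_H$. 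The integrand is therefore bounded by a constant $C_H$ uniformly on $[0,T]$, so the remaining integral contributes another factor of $|t-s|$. Combining the $|t-s|^{q-1}$ from Jensen with this $|t-s|$ gives exactly $\E[|\tZT(t)-\tZT(s)|^q]\leq C_H|t-s|^q$, as claimed.

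This argument is essentially routine once the earlier lemma is available, so I do not anticipate a genuine obstacle; the only point requiring mild care is the bookkeeping of constants and powers in the Jensen step (ensuring the exponents $q-1$ and $1$ on $|t-s|$ add correctly to $q$) and confirming that Lemma \ref{lem:J_BoundedMoments} indeed gives bounds that are uniform in the time variable so that the integral of the moments over $[s,t]$ scales linearly in $|t-s|$. The case $s\geq t$ is symmetric, so treating $t\geq s$ without loss of generality suffices.
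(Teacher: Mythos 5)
Your proof is correct and follows essentially the same route as the paper: integrate \eqref{eq: tilde Z differential} over $[s,t]$, apply Jensen's inequality to extract the factor $|t-s|^{q-1}$, and conclude with the uniform moment bounds of Lemma \ref{lem:J_BoundedMoments}. The only difference is cosmetic: you exploit the pathwise cancellation $\tJT(r)\cdot\tJT(r)^{-1}=1$ (valid since $\tJT(r)>0$) to reduce the integrand to $\tJT(r)+|\tZT(r)|$ before taking expectations, which lets you get by with $q$th moments, whereas the paper applies the stochastic Cauchy--Schwarz inequality to the product $\tJT(r)\,a(\tJT(r)^{-1}\tZT(r))$ and therefore invokes $2q$th and $4q$th moments; both are covered by Lemma \ref{lem:J_BoundedMoments}.
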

\begin{proof}
    Assume without loss of generality that $t\geq s$. By \eqref{eq: tilde Z differential}, Jensen's inequality for integrals and the stochastic Cauchy--Schwarz inequality, we have
    \begin{align*}
        \E[|\tZT(t)-\tZT(s)|^q]&=\E\left[\left|\int_{s}^t\tJT(r)a(r,\tJT(r)^{-1}\tZT(r))dr\right|^q\right]\\
        &\leq (t-s)^{q-1}\int_{s}^t\E\left[\left|\tJT(r)a(r,\tJT(r)^{-1}\tZT(r))dr\right|^q\right]\\
        &\leq (t-s)^{q-1}\int_{s}^t\E\left[\tJT(r)^{2q}\right]^\frac{1}{2}\E\left[\left|a(r,\tJT(r)^{-1}\tZT(r))\right|^{2q}\right]^\frac{1}{2}dr.
    \end{align*}
    By Lemma~\ref{lem:J_BoundedMoments} we have that $\E\left[\tJT(r)^{2q}\right]$ is uniformly bounded. The linear growth assumption on $a$ implies that 
    $$|a(r,\tJT(r)^{-1}\tZT(r))|\leq C(1+|\tJT(r)^{-1}\tZT(r)|).$$ 
    From the stochastic Cauchy--Schwarz inequality and the fact that $\tJT(r)^{-1}$ and $\tZT(r)$ both have uniformly bounded $4q$th moments, it follows that $\E\left[\left|a(r,\tJT(r)^{-1}\tZT(r))\right|^{2q}\right]$ is uniformly bounded as well. Hence, $\E[|\tZT(t)-\tZT(s)|^q]\leq C_H(t-s)^{q-1}(t-s)=C_H(t-s)^q$.
\end{proof}

\begin{lemma} \label{lemma: DJorder}
For $q\geq 1$ with $\tJT(t)$ defined in \eqref{eq: J tilde} we have the following two estimates for $r\in[t_i,t_{i+1}]$:
$$
    (i) \quad \expect{\left| \tJT(r) - \tJT(t_i) \right|^{2q}}\leq C_H\Dt^{2qH}, \qquad 
    (ii) \quad \expect{\left| \tJT(r)^{-1} - \tJT(t_i)^{-1} \right|^{2q}}\leq C_H\Dt^{2qH}.
$$    
\end{lemma}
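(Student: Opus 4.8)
The plan is to factor the increment through the common factor $\tJT(t_i)$ and reduce everything to moment estimates for a single Gaussian random variable. Writing $\tJT(t)=\exp(g(t))$ with
\begin{equation*}
g(t)=-\alpha t+\tfrac{1}{2}\beta^2\br{T^{2H}-(T-t)^{2H}}-\beta\fbm{t},
\end{equation*}
I set $D_i(r)\coloneqq g(r)-g(t_i)$, so that $\tJT(r)-\tJT(t_i)=\tJT(t_i)\br{e^{D_i(r)}-1}$ and, for the inverse, $\tJT(r)^{-1}-\tJT(t_i)^{-1}=\tJT(t_i)^{-1}\br{e^{-D_i(r)}-1}$. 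The random variable $D_i(r)$ is Gaussian: its mean is the deterministic quantity $d_i(r)\coloneqq-\alpha(r-t_i)+\tfrac{1}{2}\beta^2((T-t_i)^{2H}-(T-r)^{2H})$, and its fluctuation is $-\beta(\fbm{r}-\fbm{t_i})$, a centered Gaussian of variance $\beta^2|r-t_i|^{2H}$.

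First I would record the two smallness estimates for $r\in[t_i,t_{i+1}]$. Applying Lemma \ref{lemma: 2H power difference} to $|(T-t_i)^{2H}-(T-r)^{2H}|$ with $|r-t_i|\leq\Dt$ bounds this term by $C_H\Dt^{\min(2H,1)}\leq C_H\Dt^H$; together with $|\alpha|(r-t_i)\leq|\alpha|\Dt\leq|\alpha|\Dt^H$ (assuming $\Dt\leq1$, as we take $\Dt\to0$) this gives $|d_i(r)|\leq C_H\Dt^H$. The variance bound $\mathrm{Var}(D_i(r))\leq\beta^2\Dt^{2H}$ is immediate. In particular both the mean and variance of $D_i(r)$ are uniformly bounded on $[0,T]$, which will keep all exponential moments of $D_i(r)$ under control.

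Next I would estimate. By the Cauchy--Schwarz inequality,
\begin{equation*}
\expect{\l|\tJT(r)-\tJT(t_i)\r|^{2q}}\leq\expect{\tJT(t_i)^{4q}}^{1/2}\expect{\l|e^{D_i(r)}-1\r|^{4q}}^{1/2},
\end{equation*}
and the first factor is bounded by $C_H$ by Lemma \ref{lem:J_BoundedMoments}. For the second factor I use the elementary inequality $|e^x-1|\leq|x|e^{|x|}$ together with $e^{4q|D_i(r)|}\leq e^{4qD_i(r)}+e^{-4qD_i(r)}$ and a further Cauchy--Schwarz split to obtain
\begin{equation*}
\expect{\l|e^{D_i(r)}-1\r|^{4q}}\leq\expect{|D_i(r)|^{8q}}^{1/2}\br{\expect{e^{8q D_i(r)}}^{1/2}+\expect{e^{-8q D_i(r)}}^{1/2}}.
\end{equation*}
The exponential moments are bounded uniformly via the Gaussian moment generating function, since the mean and variance of $D_i(r)$ are bounded; and the absolute moment satisfies $\expect{|D_i(r)|^{8q}}\leq C\br{|d_i(r)|^{8q}+\mathrm{Var}(D_i(r))^{4q}}\leq C_H\Dt^{8qH}$, using $|d_i(r)|\leq C_H\Dt^H$ and $\mathrm{Var}(D_i(r))\leq\beta^2\Dt^{2H}$. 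Hence $\expect{|e^{D_i(r)}-1|^{4q}}\leq C_H\Dt^{4qH}$, its square root is $C_H\Dt^{2qH}$, and (i) follows. Part (ii) is identical with $D_i(r)$ replaced by $-D_i(r)$, whose moment generating function bounds are the same by symmetry.

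The routine parts are the two Cauchy--Schwarz splits and the elementary $|e^x-1|\leq|x|e^{|x|}$ bound. The one point that requires care — and where the estimate could break — is that $\tJT(t_i)$ and $D_i(r)$ are strongly correlated (both built from the same fBm path), so one cannot factor the expectations directly; the Cauchy--Schwarz step decouples them at the cost of doubling the exponents, which is harmless because Lemma \ref{lem:J_BoundedMoments} supplies moments of every order. The rate $\Dt^{2qH}$ is produced entirely by the fBm increment $\beta(\fbm{r}-\fbm{t_i})$, whose standard deviation is of order $\Dt^H$, with the deterministic drift $d_i(r)$ contributing no larger an amount thanks to Lemma \ref{lemma: 2H power difference}.
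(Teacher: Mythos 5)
Your proof is correct and follows essentially the same route as the paper's: both reduce the increment to the Gaussian random variable $U-V=D_i(r)$, bound its mean via Lemma \ref{lemma: 2H power difference} and its variance by $\beta^2\Dt^{2H}$, and use the Cauchy--Schwarz inequality together with the uniform moment bounds of Lemma \ref{lem:J_BoundedMoments} to decouple the exponential prefactor from the Gaussian increment. The only cosmetic difference is the elementary inequality used (the paper applies $|e^u-e^v|\leq(e^u+e^v)|u-v|$ directly, whereas you factor out $\tJT(t_i)$ and use $|e^x-1|\leq|x|e^{|x|}$ with one extra Cauchy--Schwarz split), which changes nothing of substance.
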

\begin{proof} We only prove $(i)$ here as the  proof of $(ii)$ is similar. The proof presented here follows the same approach as Lemma 3.4.6 of \cite{Mishura2008}.

Note that for $u,v\in\R$ we have
\begin{equation}
\label{eq: exp diff bound}
    |e^u-e^v|\leq(e^u+e^v)|u-v|.
\end{equation}
This can be seen by assuming without loss of generality that $v<u$ and applying the mean value theorem to find a $z\in(v,u)$ such that $e^u-e^v=e^z(u-v)\leq (e^u+e^v)(u-v)$.

Let us now set $U=-\alpha r+\frac{1}{2}\beta^2(t_n^{2H}-|t_n-r|^{2H})-\beta\fbm{r}$ and   $V=-\alpha t_i+\frac{1}{2}\beta^2(t_n^{2H}-|t_n-t_i|^{2H})-\beta\fbm{t_i}$, so that $\tJT(r)=e^U$ and $\tJT(t_i)=e^V$. Consider the Gaussian variable
 $$
 U-V=-\alpha (r-t_i)+\frac{1}{2}\beta^2(|t_n-t_i|^{2H}-|t_n-r|^{2H})-\beta (\fbm{r}-\fbm{t_i}).
 $$
 This has second moment
 \begin{align*}
 \expect{(U-V)^2}&=\beta^2 (r-t_i)^{2H}+ \left(-\alpha (r-t_i)+\frac{1}{2}\beta^2(|t_n-t_i|^{2H}-|t_n-r|^{2H})\right)^2 \\
 &   \leq C_H \Dt^{2H},
 \end{align*}
where the final inequality is in part a consequence of Lemma~\ref{lemma: 2H power difference}. Using \eqref{eq: exp diff bound} and the stochastic Cauchy--Schwarz inequality we then find that
\begin{align*}
    \expect{\left| \tJT(r) - \tJT(t_i) \right|^{2q}}&=\expect{\left|e^U-e^V\right|^{2q}}\leq\expect{\left(e^U+e^V\right)^{2q}|U-V|^{2q}}\\
    &\leq\br{\expect{\left(\tJT(r)+\tJT(t_i)\right)^{4q}}}^\frac{1}{2}\br{\expect{|U-V|^{4q}}}^\frac{1}{2}.
\end{align*}
Using Lemma~\ref{lem:J_BoundedMoments} we see that $\expect{\left(\tJT(r)+\tJT(t_i)\right)^{4q}}\leq C_H$. Moreover, since $U-V$ is Gaussian, we have that
\begin{equation*}
    \expect{|U-V|^{4q}}\leq C\br{\expect{(U-V)^{2}}}^{2q}\leq C_H\Dt^{4qH},
\end{equation*}
from which the result follows.
\end{proof}

\begin{theorem}
\label{thm: Z convergence}
Assume $a(t,x)$ satisfies Assumptions~\ref{ass:a_existence} and \ref{ass:a_numerics}. Let $Z(t)$ be the solution to \textup{\eqref{eq: Z differential}} and let $\tZT(t)$ be as in \eqref{def:ZJtilde}.  Let $\tZ_n^n$ be given by \textup{\eqref{eq: Z numerical scheme}}. Then there exists a constant $C_H>0$ such that
    \begin{equation*}
        \br{\E\left[\br{\tZ_n^n-\tZT(t_n)}^2\right]}^\frac{1}{2}\leq 
C_H \Dt^{\min(H,\zeta)}.
    \end{equation*}
\end{theorem}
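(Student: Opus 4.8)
The plan is to treat the governing equation \eqref{eq: tilde Z differential} as a pathwise random ODE and run a standard consistency-plus-stability argument (local truncation error together with a discrete Gronwall inequality), but carried out in $L^2(\P)$ and with the random coefficients controlled by the four preliminary lemmas. Write $f(s,z)\coloneqq\tJT(s)\,a(s,\tJT(s)^{-1}z)$ for the right-hand side, so that the exact process obeys $\tZT(t_{i+1})=\tZT(t_i)+\int_{t_i}^{t_{i+1}}f(s,\tZT(s))\,ds$ while the scheme \eqref{eq: Z numerical scheme} reads $\tZ_{i+1}^n=\tZ_i^n+\Dt\,f(t_i,\tZ_i^n)$. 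The first point I would record is that $f$ is globally Lipschitz in its second argument with the \emph{deterministic} constant $C$ from Assumption \ref{ass:a_existence}: because $\tJT(s)\tJT(s)^{-1}=1$, the two factors cancel and $|f(s,z_1)-f(s,z_2)|=\tJT(s)\,|a(s,\tJT(s)^{-1}z_1)-a(s,\tJT(s)^{-1}z_2)|\leq C|z_1-z_2|$. This deterministic Lipschitz bound is precisely what makes the stability half of the argument clean in $L^2$.

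Setting $e_i\coloneqq\tZ_i^n-\tZT(t_i)$ and subtracting the two recursions gives $e_{i+1}=e_i+\Dt\br{f(t_i,\tZ_i^n)-f(t_i,\tZT(t_i))}+\tau_i$, where $\tau_i\coloneqq\int_{t_i}^{t_{i+1}}\br{f(t_i,\tZT(t_i))-f(s,\tZT(s))}\,ds$ is the local truncation error. Taking $L^2(\P)$ norms and using the deterministic Lipschitz bound on the middle term yields $\|e_{i+1}\|_{L^2}\leq(1+C\Dt)\|e_i\|_{L^2}+\|\tau_i\|_{L^2}$; since $e_0=0$, a discrete Gronwall inequality then gives $\|e_n\|_{L^2}\leq e^{CT}\sum_{i=0}^{n-1}\|\tau_i\|_{L^2}$. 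It therefore suffices to establish the one-step bound $\|\tau_i\|_{L^2}\leq C_H\Dt^{1+\min(H,\zeta)}$, since then $n\,\Dt^{1+\min(H,\zeta)}=T\,\Dt^{\min(H,\zeta)}$ produces exactly the claimed rate.

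The heart of the proof, and the step I expect to be the main obstacle, is this one-step estimate. I would start from $\|\tau_i\|_{L^2}\leq\int_{t_i}^{t_{i+1}}\|f(t_i,\tZT(t_i))-f(s,\tZT(s))\|_{L^2}\,ds$ and decompose the integrand as $\br{\tJT(t_i)-\tJT(s)}a(t_i,\tJT(t_i)^{-1}\tZT(t_i))+\tJT(s)\br{a(t_i,\tJT(t_i)^{-1}\tZT(t_i))-a(s,\tJT(s)^{-1}\tZT(s))}$, splitting the second $a$-difference further into a time increment, handled by Assumption \ref{ass:a_numerics} and of order $\Dt^\zeta$, and a spatial increment, handled by the Lipschitz continuity of $a$. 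The spatial increment $|\tJT(t_i)^{-1}\tZT(t_i)-\tJT(s)^{-1}\tZT(s)|$ is in turn split as $\br{\tJT(t_i)^{-1}-\tJT(s)^{-1}}\tZT(t_i)+\tJT(s)^{-1}\br{\tZT(t_i)-\tZT(s)}$. Each resulting summand is a product of one ``small'' increment and one factor with uniformly bounded moments, so I would apply the stochastic Cauchy--Schwarz inequality and read off the orders: increments of $\tJT$ and $\tJT^{-1}$ contribute $\Dt^H$ via Lemma \ref{lemma: DJorder}, the increment of $\tZT$ contributes $\Dt$ via Lemma \ref{lem:Zcontrol}, and the time-regularity of $a$ contributes $\Dt^\zeta$; the companion factors are controlled by the linear growth assumption together with Lemma \ref{lem:J_BoundedMoments}, which supplies uniform higher moments for $\tJT(s)$, $\tJT(s)^{-1}$, $\tZT(t_i)$ and $a(t_i,\tJT(t_i)^{-1}\tZT(t_i))$. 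Since the dominant order among $\Dt^H$, $\Dt$ and $\Dt^\zeta$ is $\Dt^{\min(H,\zeta)}$, the integrand is $O(\Dt^{\min(H,\zeta)})$ in $L^2$, and integrating over the step of length $\Dt$ gives the required $\|\tau_i\|_{L^2}\leq C_H\Dt^{1+\min(H,\zeta)}$. The delicate bookkeeping is to pair, through Cauchy--Schwarz, each small increment only with factors whose moments are bounded uniformly in $i$ and $\Dt$ --- which is exactly the role the preliminary lemmas play.
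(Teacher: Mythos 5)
Your proposal is correct and follows essentially the same route as the paper: the same consistency--plus--stability structure, the same decomposition of the local error into the $\tJT$ increment (Lemma \ref{lemma: DJorder}), the time-H\"older increment of $a$ (Assumption \ref{ass:a_numerics}), the $\tJT^{-1}$ increment and the $\tZT$ increment (Lemma \ref{lem:Zcontrol}), with moments supplied by Lemma \ref{lem:J_BoundedMoments} and the conclusion via discrete Gronwall. The only difference is organizational --- you run a one-step recursion in the $L^2$ norm, whereas the paper sums globally and splits into the three terms $I_1,I_2,I_3$ before applying Gronwall --- and both exploit the same cancellation $\tJT(s)\tJT(s)^{-1}=1$ to get a deterministic Lipschitz constant for the stability term.
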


\begin{proof}

For ease of notation for this proof we suppress dependence on $T$ and $n$, so that $\tJ=\tJT$, $\tZ=\tZT$, $\tJ_i=\tJ_i^n$ and $\tZ_i=\tZ_i^n$.

Since
\begin{equation*}
    \tZ_n=x_0+\Dt\sum_{i=0}^{n-1}\tJ(t_i)a(t_i,\tJ(t_i)^{-1}\tZ_i)=x_0+\intsum \tJ(t_i)a(t_i,\tJ(t_i)^{-1}\tZ_i)ds,
\end{equation*}
by \eqref{eq: Z numerical scheme} and
\begin{equation*}
    \tZ(t_n)=x_0+\int_0^{t_n}\tJ(s)a(s,\tJ(s)^{-1}\tZ(s))ds=x_0+\intsum \tJ(s)a(s,\tJ(s)^{-1}\tZ(s))ds,
\end{equation*}
by \eqref{eq: tilde Z differential}, we can apply the triangle inequality twice, square and take expectation to get 
\begin{equation}
\label{eq:ZI2}
    \E \left[(\tZ_n-\tZ(t_n))^2\right]\leq 3\E \left[I_1^2\right]+3\E\left[I_2^2\right]+3\E\left[I_3^2\right], 
\end{equation}
where
\begin{align*}
    I_1\coloneqq&\left|\Dt\sum_{i=0}^{n-1}\tJ(t_i)a(t_i,\tJ(t_i)^{-1}\tZ_i)-\Dt\sum_{i=0}^{n-1}\tJ(t_i)a(t_i,\tJ(t_i)^{-1}\tZ(t_i))\right|,\\
    I_2\coloneqq&\left|\Dt\sum_{i=0}^{n-1}\tJ(t_i)a(t_i,\tJ(t_i)^{-1}\tZ(t_i))-\intsum \tJ(s)a(s,\tJ(s)^{-1}\tZ(t_i))ds\right|\\
    I_3\coloneqq&\left|\intsum \tJ(s)a(s,\tJ(s)^{-1}\tZ(t_i))ds-\intsum \tJ(s)a(s,\tJ(s)^{-1}\tZ(s))ds\right|.
\end{align*}
Using condition~\ref{enum: cond2} from Assumption~\ref{ass:a_existence} we see that 
\begin{equation*}
    I_1\leq \Dt\sum_{i=0}^{n-1}\tJ(t_i)\left|a(t_i,\tJ(t_i)^{-1}\tZ_i)-a(t_i,\tJ(t_i)^{-1}\tZ(t_i))\right|\leq C\Dt\sum_{i=0}^{n-1}|\tZ_i-\tZ(t_i)|.
\end{equation*}
So, by Jensens's inequality
\begin{equation}  \label{eq:bound_for_I1}
    \E \left[I_1 ^2 \right] \leq CT\Dt\sum_{i=0}^{n-1} \E \left[(\tZ_i-\tilde Z(t_i))^2 \right].
\end{equation}
For $I_2$, adding and subtracting the terms $\tJ(s)a(t_i,\tJ(t_i)^{-1}\tZ(t_i))$ and $\tJ(s)a(s,\tJ(t_i)^{-1}\tZ(t_i))$ we get 
\begin{align*}
 \bigg|  \tJ(t_i)a(t_i,\tJ(t_i)^{-1}\tZ(t_i))   - &\tJ(s)a(s,\tJ(s)^{-1}\tZ(t_i))\bigg| \leq  \left|\tJ(t_i)-\tJ(s)\right| 
 \left|a(t_i,\tJ(t_i)^{-1}\tZ(t_i)) \right|
\\ 
&+\left| \tJ(s) \right| \left|a(t_i,\tJ(t_i)^{-1}\tZ(t_i))  - a(s,\tJ(t_i)^{-1}\tZ(t_i)) \right| \\
& + \left| \tJ(s)\right| \left|a(s,\tJ(t_i)^{-1}\tZ(t_i))-a(s,\tJ(s)^{-1}\tZ(t_i)) \right|.
\end{align*}
Squaring, taking the expectation we find by Assumption~\ref{ass:a_numerics} and the stochastic Cauchy--Schwarz inequality that 
\begin{align*}
 \E \bigg[\bigg( \tJ(t_i)a(t_i,\tJ(t_i)^{-1}\tZ(t_i))  & -  \tJ(s)a(s,\tJ(s)^{-1}\tZ(t_i))\bigg)^2\bigg] \\
& \leq  \br{\E \left[\left(\tJ(t_i)-\tJ(s)\right)^4\right]}^\frac{1}{2} \br{\E \left[a(t_i,\tJ(t_i)^{-1}\tZ(t_i)) ^4\right] }^{\frac{1}{2}}\\
& \quad + C \br{\E\left[\tJ(s) ^4\right]}^\frac{1}{2} \br{\E \left[ \left(1+\left| \tJ(t_i)^{-1} \tZ(t_i)  \right|\right)^4\right]}^\frac{1}{2} \left|s - t_i \right|^ {2 \zeta}\\
& \quad + \br{\E \left[ \tJ(s)^4\right]}^\frac{1}{2}\br{\E \left[\left(a(s,\tJ(t_i)^{-1}\tZ(t_i))
-a(s,\tJ(s)^{-1}\tZ(t_i)) \right)^4\right]}^\frac{1}{2}.
\end{align*}
By Assumption~\ref{ass:a_existence} (i) and (ii) and Lemmas~\ref{lem:J_BoundedMoments} and \ref{lemma: DJorder} 
we see 
\begin{equation}  \label{eq:bound_for_I2}
    \E \left[I_2^2 \right] \leq C_HT\Dt^{\min(2H,2\zeta)}.
\end{equation}
For $I_3$ we have
\begin{equation*}
    I_3\leq \intsum \tJ(s)\left| a(s,\tJ(s)^{-1}\tZ(t_i))-a(s,\tJ(s)^{-1}\tZ(s))\right|ds\leq C\intsum |\tZ(t_i)-\tZ(s)|ds,
\end{equation*}
by the Lipschitz assumption on $a$.
Lemma~\ref{lem:Zcontrol} shows that 
\begin{equation} \label{eq:bound_for_I3}
    \E \left[I_3 ^2 \right] \leq C\intsum \E\left[\br{\tZ(t_i)-\tilde Z(s)}^2\right]ds \leq C_H\Dt^2.
\end{equation}
Bringing inequalities \eqref{eq:bound_for_I1}, \eqref{eq:bound_for_I2} and \eqref{eq:bound_for_I3} together we have 
\begin{align*}
\label{eq: Zn square sum}
\E\left[(\tZ_n-\tilde Z(t_n))^2\right] 
& \leq CT\Dt \sum_{i=0}^{n-1} \E \left[|\tZ_i-\tilde Z(t_i)|^2 \right]  +C_H\left(\Dt ^{\min(2H,2\zeta)} +\Dt ^2 \right).
\end{align*}
Thus by the discrete Gronwall inequality, found for instance in \cite{Clark1987}, we have the result.
\end{proof}

\subsection*{Proof of Theorem~\ref{thm: RMSE}} 
This follows from Theorem~\ref{thm: Z convergence}. By \eqref{eq: X(t) expression} and \eqref{eq: Xn num method} we have
    \begin{equation*}
        X_n-X(t_n)=\tJT(t_n)^{-1}\br{\tZ_n^n-\tZT(t_n)}.
    \end{equation*}
    Applying the Cauchy--Schwarz inequality gives
    \begin{equation*}
        \E\left[\br{X_n-X(t_n)}^2\right]\leq \br{\E\left[\br{\tJT(t_n)^{-1}}^2\right]}^\frac{1}{2}\br{\E\left[\br{\tZ^n_n-\tZT(t_n)}^2\right]}^\frac{1}{2}
    \end{equation*}
    Since $\E\left[\br{\tJT(t_n)^{-1}}^2\right]\leq C_H$ by Lemma~\ref{lem:J_BoundedMoments}, the result follows directly from Theorem~\ref{thm: Z convergence}. 

\begin{remark}
\label{remark: C_H behavior}
    The explicit dependence on $H$ of the constant $C_H$ appearing in Theorem \ref{thm: RMSE} can be tracked and is given by
    \begin{equation*}
        C_H= 
        \begin{cases}
         C\exp(cT^{2H}),& \text{if } H\leq \frac{1}{2},\\
         CHT^{2H-1}\exp(cT^{2H}),& \text{if } H> \frac{1}{2}.
        \end{cases}  
    \end{equation*}
    for some constants $C,c>0$ not depending on $H$. In particular, this constant does not degenerate for small values of $H$.
\end{remark}

\section{Numerical Results}\label{sec: Numerics}

In this section we illustrate our numerical method \GBMEM and examine paths for different $H$ and compare it to \MISHURA as well as two other methods we propose.
We then examine convergence for fixed $H$ and finally examine the rates of convergence as we vary $H$. We present results for the autonomous case $a(x)$ only. Code to generate the figures below are available from \url{https://github.com/Gabriel-Lord/WIS-SDEs-and-fBM.git}.

\subsection{Numerical methods}

We compare \GBMEM given in \eqref{eq: Z numerical scheme} and \eqref{eq: Xn num method} (see 
Algorithms~\ref{alg: single time} and \ref{alg: path sample}) to the method of \cite{Mishura2008}, \MISHURA. As noted in Remark~\ref{rem:Mishura1}, the two methods coincide if $\alpha=0$, and in the case of gfBm \eqref{eq: fGBM SDE}, where $a(t,x)=0$, the method \GBMEM is exact, whereas \MISHURA is not. 
We now propose two additional methods for solving \eqref{eq: tilde Z differential}, which also provide exact solutions to \eqref{eq: fGBM SDE}.

\subsubsection*{The exponential freeze method}
Consider the differential equation \eqref{eq: tilde Z differential} for the process $\tZT(t)$ and let $g(t,x)\coloneqq a(t,x)/x$ for $x\neq0$. In this case, we have
\begin{equation*}
    \frac{d}{dt}\tZT(t)=\tZT(t)g(t,\tJT(t)^{-1}\tZT(t)).
\end{equation*}
If we consider this differential equation on a small interval $[t_i,t_{i+1}]$, we can reasonably freeze $g(t,\tJT(t)^{-1}\tZT(t))$ at the constant value $g(t_i,\tJT(t_i)^{-1}\tZT(t_i))$. This means that on $[t_i,t_{i+1}]$ the process $\tZT(t)$ approximately satisfies 
\begin{equation}
\label{eq: freeze derivative}
    \frac{d}{dt}\tZT(t)\approx \tZT(t)g(t_i,\tJT(t_i)^{-1}\tZT(t_i)).
\end{equation}
With this approximation, our numerical scheme for $\tZT(t)$ becomes
\begin{equation*}
    \tZ^n_{i+1}=\tZ^n_i\exp\br{\Delta tg(t_i,\tZ^n_i/\tJ^n_i)}.
\end{equation*}
When $a(t,x)=0$ we have $g(t,x)=0$ as well and so this method solves the gfBm exactly. We denote this method \EXPFREEZE.

\subsubsection*{A Rosenbrock type approximation}
Alternatively, we could consider some specific value $A_i\in\R$ and decompose the differential equation \eqref{eq: tilde Z differential} for $\tZT(t)$ on $[t_i,t_{i+1}]$ as
\begin{equation}
\label{eq: Rosenbrock start}
    \frac{d}{dt}\tZT(t)=A_i\tZT(t)+\left[\tJT(t)a(t,\tJT(t)^{-1}\tZT(t))-A_i\tZT(t)\right].
\end{equation}
By a Lie--Trotter splitting we have 
\begin{equation*}
    \tZ^n_{i+1}=\tZ^n_i\exp\br{\Dt A_i}+\Dt\left[\tJ^n_ia(t_i,\tZ^n_i/\tJ^n_i)-A_i\tZ^n_i\right].
\end{equation*}
We still have to make a choice of $A_i$. If $a(t,x)$ is differentiable in its second component with derivative $\frac{\partial}{\partial x}a(t,x)$, then one possibility is to choose
\begin{equation*}
    A_i=\frac{\partial}{\partial x}a(t_i,\tZ^n_i/\tJ^n_i).
\end{equation*}
Provided that $\Delta t=t_{i+1}-t_i$ is small enough, this approximately cancels the first derivative term of the second component in the Taylor expansion of $\tJT(t)a(t,\tJT(t)^{-1}\tZT(t))$ around $t_i$ in \eqref{eq: Rosenbrock start}. 

Our resulting numerical scheme, denoted \Rosenbrock, is 
\begin{equation}
\label{eq: Rosenbrock scheme}
    \tZ^n_{i+1}=\tZ^n_i\exp\br{\Dt A_i}+\Dt\left[\tJ^n_ia(t_i,\tZ^n_i/\tJ^n_i)-A_i\tZ^n_i\right],\qquad 
    A_i=\frac{\partial}{\partial x}a(t_i,\tZ^n_i/\tJ^n_i).
\end{equation}
When $a(t,x)=0$, we see that $A_i=0$ as well and we once again have $\tZ^n_i=x_0$ for all values of $i$. So this numerical method also solves the gfBm exactly.

\begin{figure}[!h]
     \centering
     (a) \hspace{0.45\textwidth} (b) \\
     \includegraphics[width=0.45\textwidth]{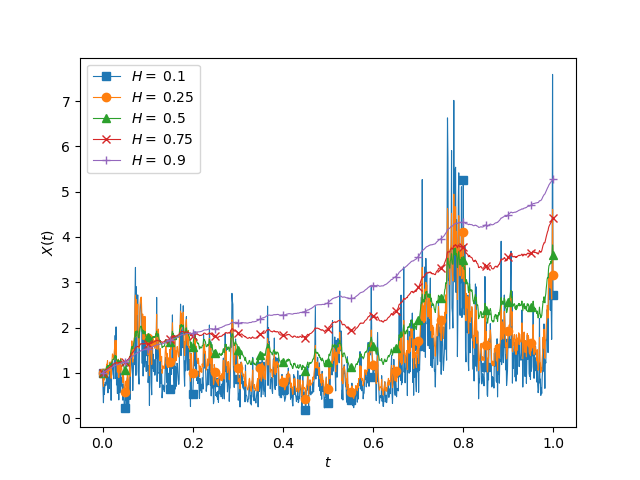}
     \includegraphics[width=0.45\textwidth]{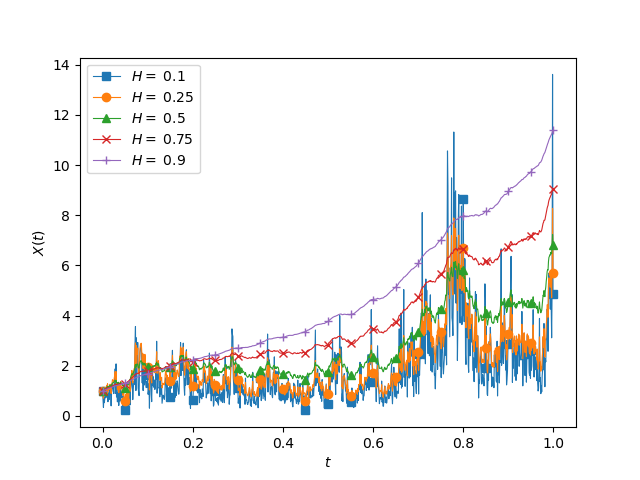}
     \caption{Sample paths of \GBMEM for different values of $H$ using same random numbers for \eqref{eq: quasi-linear SDE} with $\beta=1$, $a(x)=\frac{4x}{1+x^2}$, $x_0=1$, $\Dt=0.001$ and 
     (a) $\alpha=0$ (\MISHURA) 
     (b) $\alpha=1$ (\GBMEM).}
     \label{fig:1}
\end{figure}

\subsection{Numerical Results}
In Fig.~\ref{fig:1} we plot sample paths for $H\in\{0.1,0.25,0.5,0.75,0.9\}$ on the interval $[0,T]$ for \eqref{eq: quasi-linear SDE} with 
\begin{equation}
\label{eq: num ex}
\alpha\in\{0,1\}, \quad \beta=1, \quad a(x)=\frac{4x}{1+x^2}, \quad x_0=1, \quad T=1,
\end{equation}
and taking $\Dt=0.001$. In Fig.~\ref{fig:1}
(a) $\alpha=0$ so \GBMEM and \MISHURA coincide and in 
(b) $\alpha=1$ we use \GBMEM. 
The random numbers used are the same for each path so that the effect of $H$ on for example the regularity of the paths is evident.

In Fig.~\ref{fig:2} we compare the same sample paths for the four different methods for $H=0.25$ in (a) and $H=0.75$ in (b).  Here we take \eqref{eq: quasi-linear SDE} with 
$\alpha=1$ and other parameters as in \eqref{eq: num ex} with $\Dt=0.001$.
We see good agreement between all four methods, indeed small differences can only been observed when zooming in on the paths.

\begin{figure}[!h]
     \centering
     (a) \hspace{0.45\textwidth} (b) \\
     \includegraphics[width=0.45\textwidth]{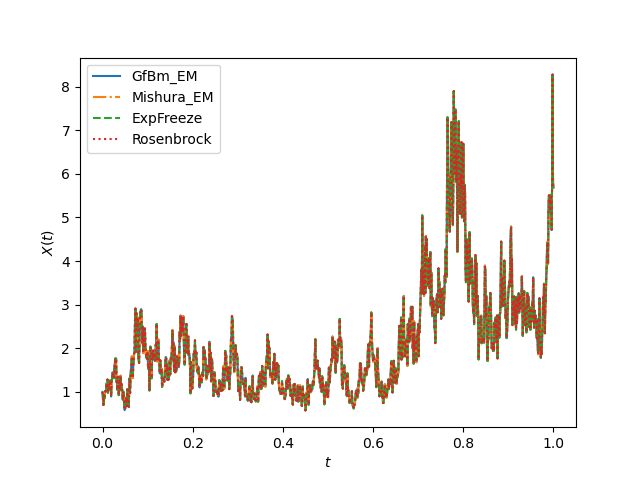}
     \includegraphics[width=0.45\textwidth]{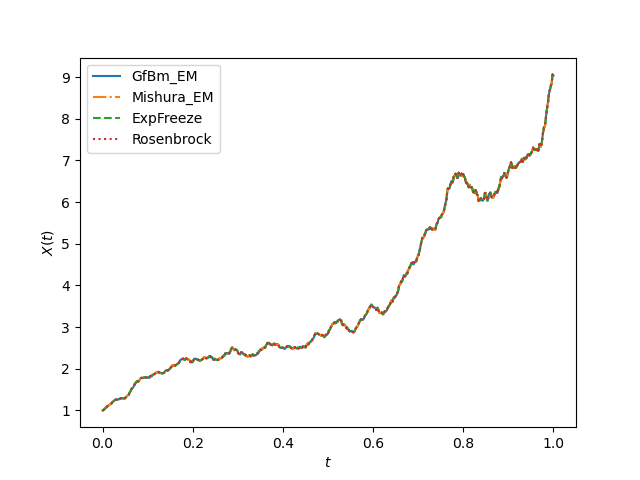}
     \caption{Sample paths of the four methods with $\alpha=1$, $\beta=1$, $a(x)=\frac{4x}{1+x^2}$, $x_0=1$, $\Dt=0.001$ using same random numbers with (a) $H=0.25$, (b) $H=0.75$.
     }
     \label{fig:2}
\end{figure}

We now examine in Fig.~\ref{fig:3} numerically the convergence of our method \GBMEM for the same problem of Fig.~\ref{fig:2} given in \eqref{eq: num ex} with $\alpha=1$. The root mean square error (RMSE) is estimated via Monte Carlo estimation with a sample size of $500$. As reference solutions we have taken \GBMEM approximations with $n=2^{19}$ steps. For both $H=0.25$ and $H=0.75$ we see that all the methods have approximately the same RMSE and rate of convergence. Note that the estimated rate of convergence in both cases is considerably better than the theoretical result of Theorem~\ref{thm: RMSE}. For $H=0.25$ we observe a rate of $0.771\approx H+\frac{1}{2}$ and for $H=0.75$ we observe a rate of $1.009\approx 1$.
\begin{figure}[!h]
     \centering
     (a) \hspace{0.45\textwidth} (b) \\
     \includegraphics[width=0.45\textwidth]{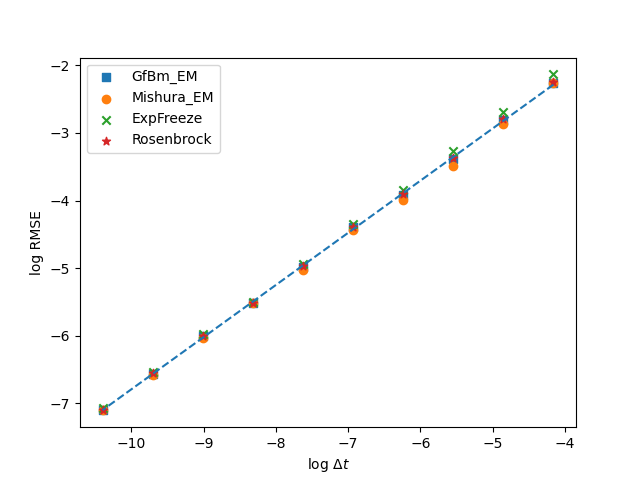}
     \includegraphics[width=0.45\textwidth]{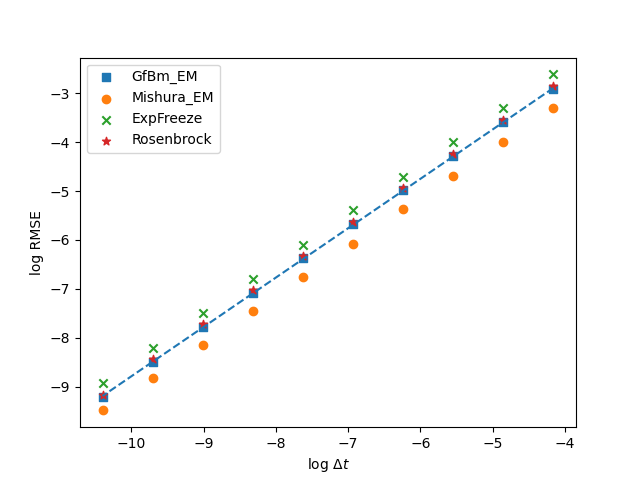}
     \caption{The estimated RMSE of the four methods for various values of $\Dt$ for the quasi-linear SDE with parameters in \eqref{eq: num ex}, $\alpha=1$ and $T=1$, including a linear fit for the \GBMEM method. We have (a) $H=0.25$, linear fit slope $0.771$ (b) $H=0.75$, linear fit slope $1.009$.
     }
     \label{fig:3}
\end{figure}

In Figs.~\ref{fig:4}--\ref{fig:6} we examine the rates of convergence against $H$ as estimated from slopes such as in Fig.~\ref{fig:3}. In order to quantify the uncertainty, we divide the Monte Carlo sample of the RMSE for each value of $H$ into batches. Among each batch the rate of convergence is estimated. We then plot the mean of the estimated rates of convergence along with error bars of the size of the sample standard deviation among the batches.
In Fig.~\ref{fig:4} we examine \eqref{eq: quasi-linear SDE} with the parameters from \eqref{eq: num ex} and consider the method \GBMEM (a) with $\alpha=0$ (and hence it coincides with \MISHURA) and in Fig.~\ref{fig:4} (b) with $\alpha =1$. We indicate in our plots the theoretical rate of convergence $H$ from Theorem~\ref{thm: RMSE}, the estimated rate of convergence, the linear fit for $H<\frac{1}{2}$ and the conjectured result of \eqref{eq:conjecture}.
In Fig.~\ref{fig:5} (a) we keep the parameters as in \eqref{eq: num ex} but increase the noise intensity to $\beta=2$. The observed rates are robust to the noise intensity (see also Fig.~\ref{fig:6} (a)). 
In Fig.~\ref{fig:5} (b) we examine a different SDE and take 
$a(x)=\cos(x)$, $\alpha=-1$, $\beta=0.5$, $x_0=10$ and solve to $T=1$. Similarly in Fig.~\ref{fig:6} (a) and  (b) we take $a(x)=25\log(1+x^2)$, $\alpha=0$, $\beta=5$, $x_0=25$ and solve to $T=1$. The total Monte Carlo sample size is now taken as $2500$, divided in batches of $250$ to obtain the given error bars. In Fig.~\ref{fig:6} (a) we plot the rate of convergence against $H$ and in (b) we examine $\log(C_H)$ where $C_H$ is the error constant from the error estimation. We observe in (a) the conjectured convergence rate \eqref{eq:conjecture} whereas in (b) it is clear that the constant $C_H$ does not degenerate for small $H$, which is in line with Remark \ref{remark: C_H behavior}. 

We have also performed further experiments with different functions $a(x)$ (including non-globally Lipschitz functions), solving over longer time intervals $T$ and larger noise intensities. These and the simulations presented here in Figs.~\ref{fig:4}--\ref{fig:6} all support the conjecture in \eqref{eq:conjecture}.

\begin{figure}[!h]
     \centering
          (a) \hspace{0.48\textwidth} (b) \\
     \includegraphics[width=0.48\textwidth]{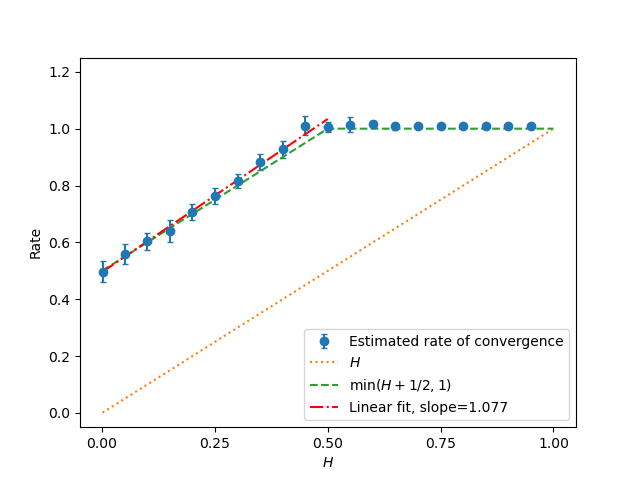}
     \includegraphics[width=0.48\textwidth]{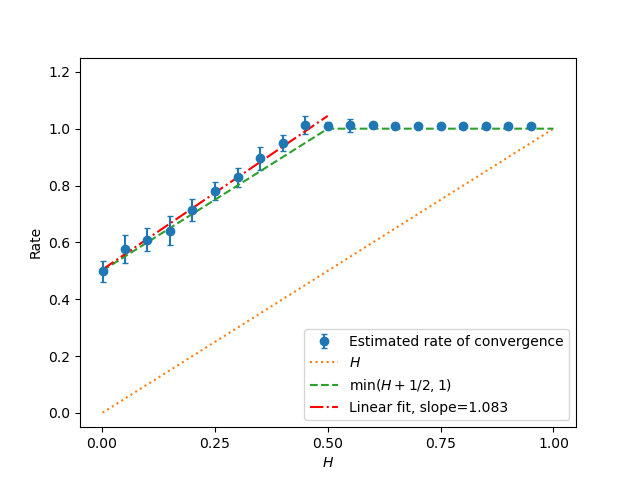}
     \caption{
     Estimated rates of convergence for $\beta=1$, $a(x)=\frac{4x}{1+x^2}$, $x_0=1$ and $T=1$.  The total Monte Carlo sample size is $500$, divided in batches of $50$ to obtain the given error bars. In (a) $\alpha=0$ (\MISHURA) and in (b) $\alpha=1$ (\GBMEM).}
     \label{fig:4}
\end{figure}

\begin{figure}[!h]
     \centering
          (a) \hspace{0.48\textwidth} (b) \\  
     \includegraphics[width=0.48\textwidth]{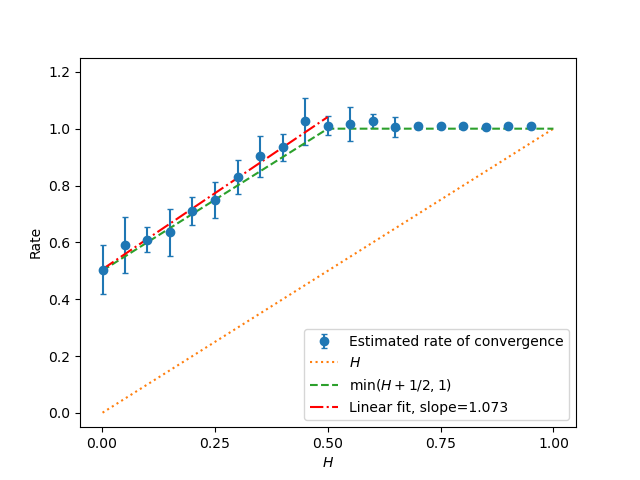}
          \includegraphics[width=0.48\textwidth]{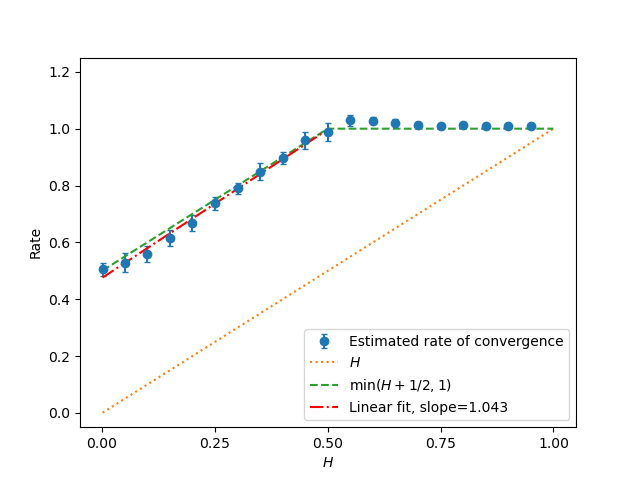}
     \caption{(a) The estimated rates of convergence for $\alpha=1$, $\beta=2$, $a(x)=\frac{4x}{1+x^2}$, $x_0=1$ and $T=1$ (\GBMEM).
     (b) The estimated rates of convergence for $\alpha=-1$, $\beta=0.5$, $a(x)=\cos(x)$, $x_0=10$ and $T=1$ (\GBMEM). In (a) and (b) the total Monte Carlo sample size is $500$, divided in batches of $50$ to obtain the given error bars.
     }
     \label{fig:5}
\end{figure}

\begin{figure}[!h]
     \centering
          (a) \hspace{0.48\textwidth} (b) \\
     \includegraphics[width=0.48\textwidth]{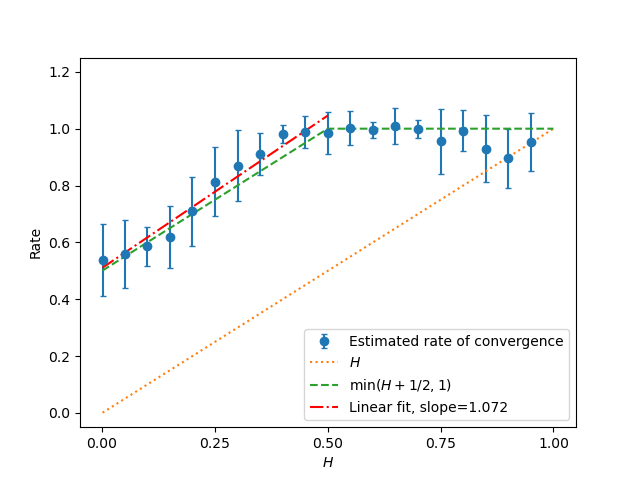}
      \includegraphics[width=0.48\textwidth]{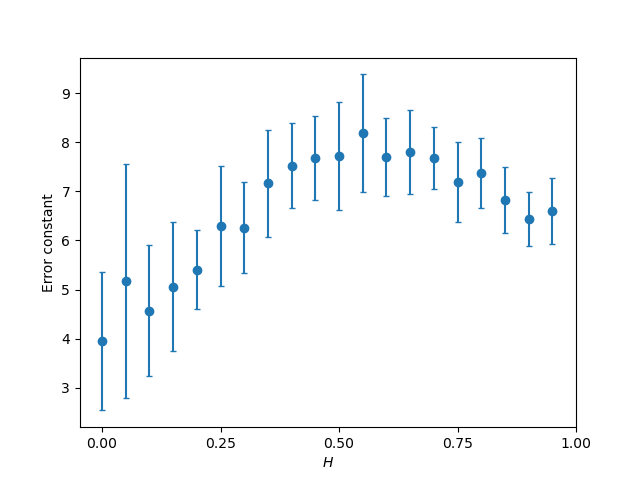}
     \caption{(a) The estimated rates of convergence for $\alpha=0$, $\beta=5$, $a(x)=25\log(1+x^2)$, $x_0=25$ and $T=1$ (\MISHURA). The total Monte Carlo sample size is $2500$, divided in batches of $250$ to obtain the given error bars.   
     (b) The corresponding estimation of the error constant $\log(C_H)$.}
    \label{fig:6}
\end{figure}

\section{Discussion and Conclusion}\label{sec:discussion}

One open question is whether the theoretical rate of convergence can be improved and approach the conjectured estimate \eqref{eq:conjecture} observed in our numerical experiments.  

In our proof of the rate of convergence it is the term in $I_2$ in \eqref{eq:ZI2} in Theorem~\ref{thm: Z convergence} that the limiting factor in obtaining a higher rate of convergence, provided $\zeta>H$ in Assumption~\ref{ass:a_numerics}. One potential way to show a faster convergence rate is to use 
the fractional It\^o formula of Theorem~\ref{thm: Ito formula}. This is the same approach used in \cite{ErdoganLord} to analyse  $H=\frac{1}{2}$ to obtain rate one convergence, or is used to analyse a standard Milstein type method and obtain higher rates of convergence. The expansion
of $\tJT(s)a(\tJT(s)^{-1}\tZT(t_i))$ leads to three terms that need to be bounded with an appropriate order. For two of the terms this can be achieved. However there is one term of the form
\begin{equation}
 \label{eq:ThetaDef}
    \Theta_i\coloneqq \int_{t_i}^{t_{i+1}}  \int_{t_i}^{s}  \der{\Phi^{(i)}}{x} (t_i,\fbm{t_i}) d\fbm{r}  ds, 
\end{equation} 
where
$$ \der{ \Phi^{(i)}} {x} (r,\fbm{r})
 = -\beta \left(a\left(\tJT(r)^{-1}\tZT(t_i)\right) \tJT(r) -
a'\left(\tJT(r)^{-1}\tZT(t_i)\right) \tZT(t_i)\right).$$
We need to examine 
$\E\left[ \left| \sum_{i=0}^ {n-1}\Theta_i  \right|^2\right]$. 
Although for all $H>\frac{1}{4}$ we can prove there exists a constant $C_H>0$ such that 
$$\E\left[ \left| \sum_{i=0}^ {n-1}\Theta_i ^2  \right|\right] \leq C_H \Delta t^ {2H+1},$$
the issue is with cross terms $\E\left[ \Theta_i \Theta_k  \right]$. When $H=\frac{1}{2}$ we can prove for $k>i$ that $\E\left[ \Theta_i \Theta_k  \right]=0$, however, for $H\neq \frac{1}{2}$ this is no longer true, as we no longer have a martingale.
When 
$\der{\Phi^{(i)}}{x} (t_i,\fbm{t_i}) $ is deterministic then we can show the cross terms give a term of higher order, in fact of $O(\Dt^{\min(2H+1,2)})$. However this higher order is lost when the stochastic nature of $\der{\Phi^{(i)}}{x} (t_i,\fbm{t_i})$ is taken into account by an application of the stochastic Cauchy--Schwarz inequality. After looking in detail at the expansion we suspect an alternative approach is required. We note that in the case that $a(t,x)$ is linear in $x$ then the problematic term is zero. 
One possible approach to deal with the terms $\Theta_i$ is to avoid the application of the stochastic Cauchy--Schwarz inequality and instead try to use Malliavin calculus, together with the assumption of some higher order derivative bounds on $a$. 
We leave this for future research.

Both the \MISHURA and the \GBMEM methods are obtained by using an explicit Euler approximation of \eqref{eq: integral Z}. In Section \ref{sec: Numerics} we suggest two alternative ways to approximate the solution to the underlying differential equation. Nonetheless, there are of course a lot of other (higher order) methods available for the approximation of the integral in \eqref{eq: integral Z}. Exploration of these methods 
would be interesting and could possibly lead to higher order schemes.

The quasilinear SDE \eqref{eq: quasi-linear SDE} we consider in this paper is one-dimensional. We believe that the results presented in this paper can be extended to a multi-dimensional SDE with a single one-dimensional driving fBm $\fbm{t}$ of the form
\begin{equation*}
dX(t)=\mathbf{A} X(t)dt+a(t,X(t))dt+\mathbf{B} X(t)dB^H(t), \quad X(0)=x_0, \quad t\in[0,T],
\end{equation*}
where $\mathbf{A},\mathbf{B}\in\R^{d\times d}$ are two commuting matrices, $x_0\in\R^d$, $T>0$ and $a\colon[0,T]\times\R^d\to\R^d$. The WIS integral in this SDE is interpreted component wise. In this setting, the geometric fractionional Brownian motion is the solution to the differential equation in $((\calS)^*)^d$ given by $X(0)=x_0$ and 
\begin{equation}
\label{eq: multi-dimensional GfBm DE}
    \frac{d}{dt}X(t)=\mathbf{A}X(t)dt+\mathbf{B}X(t)\dm W^H(t).
\end{equation}
Theorem 3.1.5 of \cite{Holden1996} shows that the solution to this differential equation is given by
\begin{equation*}
    X(t)=\wexp{t\mathbf{A}+\fbm{t}\mathbf{B}}x_0,
\end{equation*}
where $\textup{exp}^\dm$ now is the matrix Wick exponential. A relation similar to \eqref{eq: wexp to exp} can be derived for this setting, which will show that this solution can be written as
\begin{equation*}
    X(t)=\exp\br{t\mathbf{A}-\frac{1}{2}t^{2H}\mathbf{B}^2+\fbm{t}\mathbf{B}}x_0.
\end{equation*}
The fact that $X(t)$ solves \eqref{eq: multi-dimensional GfBm DE} can also be verified using the fractional It\^o formula given in Theorem \eqref{thm: Ito formula}. With this observation in mind, we believe a similar approach to that of Section \ref{sec: quasilinear} can be taken.


Extending the quasilinear SDE even further to allow for multi-dimensional driving fractional noise requires more effort. First, we will need a multi-dimensional version of WIS integration, which can be found in in Section 4.6 of \cite{Biagini2008}. This definition allows for distinct values of the Hurst parameters $H$ for each of the one-dimensional fBms involved. With this notion of multi-dimensional WIS integration, one needs to find the resulting multi-dimensional geometric fBm and use it to construct a suitable integrating factor $J(t)$. This integrating factor will consist of a Wick exponential, which, for the construction of a numerical method, needs to be rewritten into a regular exponential using a relation similar to that of \eqref{eq: wexp to exp}. In this setting with multiple driving fBms, deriving such a relation might prove to be challenging. A crucial ingredient of the subsequent existence and uniqueness proof and the construction of a numerical method is Gjessing's lemma. It is therefore an interesting question whether this lemma extends to the setting of multi-dimensional WIS integration given in \cite[Section 4.6]{Biagini2008} and whether the integrating factor is of a form that allows for the application of this lemma.

It would also be interesting to extend the setting discussed in this paper to that of non-linear diffusion terms. The transformation to the process $Z$ in the proof of Theorem \ref{thm: quasilinear SDE} is essentially a Doss--Sussmann transformation. This observation might help to extend the results to more general diffusion terms. In such more general settings we unfortunately run into the fact that the involved Wick exponential cannot be easily rewritten using \eqref{eq: wexp to exp}, making the construction of a numerical method challenging. One could also consider a Lamperti transformation to extend the results to more general diffusion terms. This approach will again suffer from the appearance of Wick products that cannot be easily calculated, and you can potentially lose the Lipschitz continuity of the drift after the transform, requiring a new proof of existence and uniqueness.

We can show the difficulty in the following particular setting, where we to transform the SDE to one with a linear diffusion term, at the cost of needing to evaluate Wick products.

For $a\colon [0,T]\times\R\to\R$, $\beta\in\R$ and $\theta>0$ consider the SDE in $(\calS)^*$ given by
\begin{equation*}
    \frac{d}{dt}X(t)=a(t,X(t))+\beta X(t)^{\dm\theta}\dm W^H(t).
\end{equation*}
See Example 2.6.15 in \cite{Holden1996} for some remarks on $X^{\dm\theta}$ and the fact that $X^{\dm\theta}\dm X^{\dm(-\theta)}$=1. All we need for this to be well-defined is that $\E[X]\neq0$. 

Let $Y(t)\coloneqq \wexp{X(t)^{\dm(1-\theta)}}$. Note that if $X(0)=x_0$ is deterministic, we have $Y(0)=\wexp{x_0^{\dm(1-\theta)}}=\exp(x_0^{1-\theta})$. Applying the Wick chain rule two times gives
\begin{align*}
    \frac{d}{dt}Y(t)&=(1-\theta)Y(t)\dm X(t)^{\dm(-\theta)}\dm\frac{d}{dt}X(t)\\
    &=(1-\theta)Y(t)\dm X(t)^{\dm(-\theta)}\dm a(t,X(t))+\beta(1-\theta) Y(t)\dm W^H(t).
\end{align*}
In particular, if $a(t,x)=\alpha x$ for some $\alpha\in\R$, we have 
\begin{align*}
    \frac{d}{dt}Y(t)&=\alpha(1-\theta)Y(t)\dm X(t)^{\dm(1-\theta)}+\beta(1-\theta) Y(t)\dm W^H(t)\\
    &=\alpha(1-\theta) Y(t)\dm \log^\dm(Y(t))+\beta(1-\theta) Y(t)\dm W^H(t).
\end{align*}
where we, once again, refer to Example 2.6.15 in \cite{Holden1996} for some words on $\log^\dm(X)$. Hence, we could use the numerical methods suggested in this paper in order to sample $Y(t)$ if we know how to express the function $Z\mapsto Z\dm\log^\dm(Z)$ as a function $f\colon\R\to\R$.

We could also consider $a\colon(\calS)^*\to(\calS)^*$ with the specific choice of $a(X)=\alpha X^{\dm\theta}$. In this case we get the SDE
\begin{equation*}
    \frac{d}{dt}Y(t)=\alpha(1-\theta)Y(t)+\beta(1-\theta) Y(t)\dm W^H(t).
\end{equation*}
The solution to this equation is given by the geometric fBm:
\begin{align*}
    Y(t)&=Y(0)\dm\wexp{\alpha(1-\theta)t+\beta(1-\theta)\fbm{t}}\\
    &=\wexp{x_0^{1-\theta}+\alpha(1-\theta)t+\beta(1-\theta)\fbm{t}}.
\end{align*}
Hence, if $\theta\neq 1$, we have
\begin{equation*}
    X(t)=(\log^\dm(Y(t)))^{\dm\br{\frac{1}{1-\theta}}}=(x_0^{1-\theta}+\alpha(1-\theta)t+\beta(1-\theta)\fbm{t})^{\dm\br{\frac{1}{1-\theta}}},
\end{equation*}
a result that also can be obtained via the substitution $Y(t)=X^{\dm 1-\theta}$.
We can deal with the Wick powers in the special case $\theta=\frac{n-1}{n}$ for some $n\in\N$, $n\neq 1$, which corresponds to $\frac{1}{1-\theta}=n$. The solution is then explicitly given by
\begin{align*}
    X(t)&=\sum_{k=0}^n\binom{n}{k}\br{x_0^{1/n}+\frac{\alpha t}{n}}^{n-k}\br{\frac{\beta}{n}}^k\fbm{t}^{\dm k}\\
    &=\sum_{k=0}^n\binom{n}{k}\br{x_0^{1/n}+\frac{\alpha t}{n}}^{n-k}\br{\frac{\beta}{n}}^kt^{kH}h_k\br{\frac{\fbm{t}}{t^H}},
\end{align*}
which can be checked using several results mentioned in \cite{Holden1996}.

In conclusion, we have shown how to implement and proved convergence for all $H\in(0,1)$ for a numerical method for the SDE \eqref{eq: quasi-linear SDE} where the stochastic integral is understood in the WIS sense. However the proof of conjecture \eqref{eq:conjecture}, the extension to a system of WIS SDEs and to non-linear noise terms are interesting future challenges. 
We have not discussed or examined numerically weak convergence and in particular what rate is observed. This would make an interesting follow up to this work. Although our analysis is restricted to one-dimension we believe it should be possible to extend to multi-dimensions.
This work opens up the possibility to efficiently simulate SDEs for all values of $H$ and hence the ability to examine and simulate fBm models enabling their uptake.


\appendix

\section{The Hida spaces}\label{sec:Hida}

To accommodate the derivative of factional Brownian motion $B^H$, we introduce the Hida spaces. We refer to \cite[Section 2.3]{Holden1996} for more details.  

Suppose $F=\sum_{\alpha\in\J}c_\alpha\calH_\alpha$. For any $q\in\R$ we define
\begin{equation*}
    \|F\|_q\coloneqq\br{\sum_{\alpha\in\J}c_\alpha^2\alpha!(2\N)^{q\alpha}}^{\frac{1}{2}},
\end{equation*}
where $(2\N)^\gamma\coloneqq\prod_{i=1}^n(2i)^{\gamma_i}$ for $\gamma\in\R^n$. Note that this expression does not need to be finite. 

\begin{definition}
The \emph{Hida test function space}, denoted by $(\calS)$, consists of all $\varphi\in{L^2(\P)}$ such that 
\begin{equation*}
    \|\varphi\|_k<\infty, \qquad \textup{for all } k\in\N.
\end{equation*}
The \emph{Hida distribution space}, denoted by $(\calS)^*$, consists of all expansions $F=\sum_{\alpha\in\J}c_\alpha\calH_\alpha$
such that 
\begin{equation*}
    \|F\|_{-q}<\infty, \qquad \textup{for some } q\in\N.
\end{equation*}
\end{definition}

By definition, $(\calS)\subset{L^2(\P)}$. Moreover, if $F=\sum_{\alpha\in\J} c_\alpha\calH_\alpha\in{L^2(\P)}$, then we see that
    \begin{equation*}
        \|F\|_0^2=\sum_{\alpha\in\J} c_\alpha^2 \alpha!=\E[F^2]<\infty,
    \end{equation*}
    which implies that $F\in(\calS)^*$. As a result, we have the inclusions
    \begin{equation*}
        (\calS)\subset{L^2(\P)}\subset(\calS)^*.
    \end{equation*}
    Notice in particular the analogy with the inclusions of the Schwartz space and its dual, which are $\calS(\R)\subset{L^2(\R)}\subset\calS'(\R)$. 

\begin{remark}
    More generally, it can be shown that for any $p>1$, we have that
    \begin{equation*}
        (\calS)\subset L^p(\P)\subset(\calS)^*.
    \end{equation*}
    See Corollary 2.3.8 in \cite{Holden1996} for a proof.
\end{remark}

\begin{remark}
    It can be shown that both $(\calS)$ and $(\calS)^*$ are vector spaces. Some simple calculations show that for every $k\in\N$, the function $\|\cdot\|_k$ is a norm on $(\calS)$. 
\end{remark}

We equip $(\calS)^*$ with the final topology generated by $\|\cdot\|_{-q}$, with $q\in\N$. This topology is such that convergence holds if and only if convergence holds in $\|\cdot\|_{-q}$ for some $q\in\N$. Likewise, we equip $(\calS)$ with the topology generated by the norms $\|\cdot\|_{k}$, with $k\in\N$. This topology is such that convergence holds if and only if convergence holds in $\|\cdot\|_k$ for all $k\in\N$. As the notation suggests, $(\calS)^*$ can be interpreted as the dual space of $(\calS)$.

\section*{Acknowledgments}
This project originated from a research visit by the second author to the University of Oslo, funded by 
STORM: Stochastics
for Time-Space Risk Models, from the Research Council of Norway (RCN), project number: 274410.

\section*{Funding}
The first author is supported by Eski\c{s}ehir Technical University Scientific Research Project Commission under grant no: 23ADP097. The third author is supported by NWO grant VI.Vidi.213.070.

\printbibliography[heading=bibintoc,title={References}]

\end{document}